\newcommand{\R}{\mathbb{R}}
\newcommand{\eps}{\varepsilon}
\newtheorem{lemma}{Lemma}
\newtheorem{theorem}{Theorem}
\newtheorem{proposition}{Proposition}
\numberwithin{equation}{section}
\newcommand{\EXP}{\mathbb{E}}
\newcommand{\PROB}{\mathbb{P}}
\newcommand{\lambdastar}{\lambda_{*}}
\newcommand{\IND}[1]{\mathbbm{1}_{\{ #1 \}}}
\newcommand{\one}{\mathbbm{1}}
\newcommand{\Id}{\mathrm{Id}}
\begin{document}

\begin{titlepage}
\thispagestyle{empty}
\setcounter{page}{1}

 \title{Local optima of the Sherrington-Kirkpatrick Hamiltonian}
% \thanks{
% G\'abor Lugosi was supported by the Spanish Ministry of Science and Technology grant MTM2015-67304.
% }
% }
\author{
Louigi Addario-Berry
\thanks{Supported by  NSERC Discovery Grant 341845.}
\\
Mathematics and Statistics \\
McGill University, \\
Montreal, Canada \\
louigi.addario@mcgill.ca
  \and 
Luc Devroye
\thanks{Supported by the Natural Sciences and Engineering
Research Council (NSERC) of Canada.}
\\
School of Computer Science \\
McGill University, \\
Montreal, Canada \\
lucdevroye@gmail.com
\and
G\'abor Lugosi
\thanks{
Supported by 
the Spanish Ministry of Economy and Competitiveness,
Grant MTM2015-67304-P and FEDER, EU.
}
 \\
ICREA,  \\ 
Department of Economics,  \\
Pompeu Fabra University, \\
Barcelona GSE \\
%Graduate School of Economics \\
Barcelona, Spain; \\ 
gabor.lugosi@upf.edu
\and 
Roberto Imbuzeiro Oliveira
\thanks{Support from CNPq, Brazil
via {\em Ci\^{e}ncia sem Fronteiras} grant \# 401572/2014-5.
Supported by a {\em Bolsa de Produtividade em Pesquisa} from CNPq, Brazil.
Supported by FAPESP Center for Neuromathematics (grant\# 2013/ 07699-0 , FAPESP - S. Paulo Research Foundation).} \\
IMPA, Rio de Janeiro, RJ, \\
Brazil;\\ 
rimfo@impa.br
}

\maketitle

\begin{abstract}
We study local optima of the Hamiltonian of the Sherrington-Kirkpatrick
model. We compute the exponent of the expected number of local optima 
and determine the ``typical'' value of the Hamiltonian.
\end{abstract}

% \noindent
% {\bf Keywords:}

\end{titlepage}

\setcounter{page}{2}
\section{Local optima of the Hamiltonian}

Let $W=(W_{i,j})_{n\times n}$ be a symmetric matrix with zero diagonal such that the $(W_{i,j})_{1\le i<j\le n}$ 
are independent standard normal random variables.%We assume, for simplicity, that $n$ is even. 
The \emph{Sherrington-Kirpatrick} model of spin glasses is defined by a \emph{random Hamiltonian}, that is, a random function $H:\{-1,+1\}^n\to \R$. For a configuration $\sigma = (\sigma_i)_{i=1}^n\in \{-1,+1\}^n$, $H(\sigma)$ is defined as follows.
\[H(\sigma) := \sum_{1\leq i<j\leq n}\sigma_i\,\sigma_j\,W_{ij}.\]
We follow the usual convention of calling $\sigma\in \{-1,+1\}^n$ a \emph{spin configuration}, the coordinates of $\sigma$ \emph{spins}, and the value $H(\sigma)$ the \emph{energy} of configuration $\sigma$. 

Given $i\in[n]$ and $\sigma$ as above, we let $\sigma^{(i)}$ denote a new configuration obtained from $\sigma$ by flipping the $i$-th spin and leaving other coordinates unchanged. That is,
\[\sigma^{(i)}_j:= \left\{ \begin{array}{ll}-\sigma_i, & j=i;\\ \;\sigma_j, & j\in [n]\backslash \{i\}.\end{array}\right.\] 
We say that $\sigma$ is a \emph{local minimum} or a \emph{local optimum} of $H$ if 
\[\forall i\in[n]\,:\, H(\sigma^{(i)})\geq H(\sigma).\]
That is, $\sigma$ is a local minimum if flipping the sign of any individual spin does not decrease the value of the energy. 

The global optimum $\min_{\sigma\in \{-1,+1\}^n} H(\sigma)$---called the ``ground-state energy''---has been extensively
studied. 
The problem was introduced by Sherrington and Kirkpatrick
\cite{ShKi75}
as a mean-field model for spin glasses. The value of the optimum was determined non-rigorously in the seminal work
of Parisi \cite{Par80}, as a consequence of the so-called ``Parisi
formula''. Parisi's formula was proved by Talagrand \cite{Tal06} in a
breakthrough paper, see also Panchenko \cite{Pan13} for an overview.
It follows from Talagrand's result that
\[
  n^{-3/2} \min_{\sigma\in \{-1,+1\}^n} H(\sigma) \to - c \quad \text{in probability,}
\]
where $c$ is a constant whose value is numerically estimated to be about $0.7632...$ 
(Crisanti and Rizzo \cite{CrRi02}) and known to be bounded by
$\sqrt{2/\pi} \approx 0.797885...$ (Guerra \cite{Gue03}). 

% NEED TO CHECK

In this paper we are interested in locally optimal solutions. An
important reason of why local optima are worth considering is because
local optima may be computed quickly by simple greedy algorithms,
see \cite{AnBuPeWe16} and subsection \ref{sub:maxcut} below. We show that the expected number of local optima grows exponentially 
and we establish the rate of growth. Also, we examine the conditional distribution
of $H(\sigma) n^{-3/2}$ given that $\sigma$ is locally optimal. We prove that the
distribution is concentrated on an interval of exponentially small
width and determine the location.

\subsection{Results}

% NEED TO CHECK
In order to state the
main result of the paper, we need a few definitions.

Let $\Phi(\lambda)=\PROB\{N\le \lambda\}$ be the distribution function
of a standard normal random variable $N$ and introduce
$\phi(\lambda)= \log(2\Phi(\lambda))$. For $x\geq \sqrt{2/\pi}$, we let $\mu^*(x)$ denote the following Fenchel-L\'{e}gendre transform:
\[\mu^*(x) := \sup_{\lambda\geq 0}\,\left(\lambda\,x - \frac{\lambda^2}{2} - \phi(\lambda)\right).\]
Lemma \ref{lem:LDP} below shows that $\mu^*:[\sqrt{2/\pi},+\infty)\to \R$ is well defined. Lemma \ref{lem:estimatesforproof} shows that the mapping
\[x\geq \sqrt{\frac{2}{\pi}}\mapsto R(x):= \frac{x^2}{4} - \mu^*(x)\]
is strictly concave and achieves its global maximum at $x=v_*>\sqrt{\frac{2}{\pi}}$. We let $\alpha^*=R(v^*)>0$ denote the maximum value of $R$. 

\begin{theorem}
\label{thm:main}We have that, as $n\to +\infty$, for any choice of $\sigma=\sigma(n)\in \{-1,+1\}^n$,
\[
  \lim_{n\to +\infty}\frac{1}{n} \log \PROB\left\{ \sigma \ \text{is
     locally optimal} \right\} = \alpha^* - \log 2 .
\]
Moreover, there exists constants $\epsilon_0>0$, $L>0$ and $n_0$ such that, for $0<\epsilon <\epsilon_0$ and $n\geq n_0$,
\[\PROB\left\{\left. -\frac{v^*}{2} - \epsilon \le n^{-3/2} H(\sigma) \le -\frac{v^*}{2} + \epsilon \,\right| \,\sigma \text{ is locally optimal} 
\right\}  \geq 1- \exp\left(L\,\sqrt{n} -\epsilon^2\,n\right).\]
\end{theorem}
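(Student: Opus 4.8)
Fix $\sigma \in \{-1,+1\}^n$. By symmetry of the standard normals we may assume $\sigma = (1,\dots,1)$, so that $H(\sigma) = \sum_{i<j} W_{ij}$ and, for each $i$, $H(\sigma^{(i)}) - H(\sigma) = -2\sum_{j\neq i} W_{ij} = -2 g_i$ where $g_i := \sum_{j\neq i} W_{ij}$. Thus $\sigma$ is locally optimal iff $g_i \le 0$ for all $i \in [n]$, and on this event $H(\sigma) = \tfrac12 \sum_i g_i$. The vector $g = (g_1,\dots,g_n)$ is centered Gaussian with $\mathrm{Var}(g_i) = n-1$ and $\mathrm{Cov}(g_i,g_j) = 1$ for $i \neq j$; equivalently $g = \sqrt{n-2}\, Z + Y\, \one$ where $Z$ is a standard Gaussian vector, $Y$ an independent standard Gaussian, and $\one$ the all-ones vector. (The exact constants $n-2$, $1$ are what one must check; the split $g = $ i.i.d.\ part $+$ common shift is the key structural fact.)

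\emph{The exponent.} Writing $g_i = \sqrt{n-2}\,Z_i + Y$, the event $\{\forall i: g_i \le 0\}$ becomes $\{\forall i: Z_i \le -Y/\sqrt{n-2}\}$. Conditioning on $Y$ and using independence of the $Z_i$,
\[
\PROB\{\sigma \text{ locally optimal}\} = \EXP\Big[\Phi\big(-Y/\sqrt{n-2}\big)^n\Big].
\]
Substituting $\lambda = -Y/\sqrt{n-2}$ (so $Y$ contributes a Gaussian density with variance $n-2$) and writing $\Phi(\lambda)^n = \exp(n\log\Phi(\lambda))$, a Laplace/Varadhan analysis of
\[
\frac1n\log \EXP\Big[\exp\big(n[\log\Phi(-Y/\sqrt{n-2})]\big)\Big]
\]
identifies the rate as $\sup_{\lambda}\big(\log\Phi(\lambda) - \tfrac{\lambda^2}{2} + \tfrac{\lambda^2}{2}\big)$ — one must track that the Gaussian prior on $Y$ after rescaling contributes exactly the term cancelling against $\phi$'s definition $\phi(\lambda) = \log(2\Phi(\lambda))$. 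After bookkeeping this yields $\alpha^* - \log 2$ with $\alpha^*$ as defined via $R$; the optimizing $\lambda$ corresponds to $v^*$. I expect the cleanest route is to invoke Lemma~\ref{lem:LDP} directly for the large-deviation rate of $n^{-1}\sum_i \IND{g_i \le 0}$-type quantities (or of the empirical mean of the $g_i$ conditioned on negativity), rather than re-deriving Varadhan's lemma by hand.

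\emph{The conditional concentration of $H(\sigma) n^{-3/2}$.} On the event $A := \{\forall i: g_i \le 0\}$ we have $n^{-3/2} H(\sigma) = \tfrac12 \cdot n^{-1/2}\cdot \tfrac1n\sum_i g_i$. Hmm — $g_i$ has standard deviation $\sqrt{n}$, so $n^{-1/2} g_i$ is $O(1)$ and $n^{-3/2} H(\sigma) = \tfrac12 \cdot \tfrac1n \sum_i (n^{-1/2} g_i)$ is an average of $O(1)$ quantities. Under the conditioning, the common shift $Y$ is, with overwhelming probability, pinned near $-v^*\sqrt{n}$ (the value where the Laplace integral concentrates), since $\lambda^* = -Y/\sqrt{n-2} = v^*$ at the optimum. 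Given $Y$, the $Z_i$ are i.i.d.\ conditioned to each lie below the threshold $-Y/\sqrt{n-2} \approx v^*$, so $\tfrac1n\sum_i Z_i$ concentrates around the conditional mean $\EXP[Z \mid Z \le v^*]$ by a standard sub-Gaussian (or bounded-difference) argument for i.i.d.\ variables, with Gaussian tails $\exp(-c\epsilon^2 n)$. Combining the two sources of fluctuation: $n^{-3/2}H(\sigma) \approx \tfrac12\big( v^* + \EXP[Z\mid Z\le v^*]\big)$ up to $\epsilon$, and the first-order optimality condition defining $v^*$ forces this to equal $-v^*/2$ (this is exactly where the equation $\frac{d}{d\lambda}(\lambda x - \lambda^2/2 - \phi(\lambda)) = 0$, i.e.\ $x - \lambda - \phi'(\lambda) = 0$ with $\phi'(\lambda) = \varphi(\lambda)/\Phi(\lambda) = -\EXP[Z\mid Z\le\lambda]\cdot(\text{sign})$, enters — the calculus identity $\EXP[Z \mid Z \le \lambda] = -\varphi(\lambda)/\Phi(\lambda)$ makes it collapse). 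The $\exp(L\sqrt n - \epsilon^2 n)$ form of the bound signals that the argument is: (i) a deviation of order $\epsilon^2 n$ from the i.i.d.\ concentration and the $Y$-concentration, against (ii) a polynomial-in-$n$, specifically $\exp(O(\sqrt n))$, loss from conditioning on the rare event $A$ (whose probability is $e^{-\Theta(n)}$, so conditional bounds pay $e^{\Theta(n)}$ — the $\sqrt n$ must instead come from the \emph{width} of the Laplace peak for $Y$, which has standard deviation $\Theta(1)$ in the original scale but the relevant normal fluctuation of the constrained quantity is $\Theta(1/\sqrt n)$, giving $\exp(L\sqrt n)$ slack when translating unconditional Gaussian tail bounds through the conditioning).

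\emph{Main obstacle.} The delicate point is the second half: getting concentration of a quantity conditioned on an event of exponentially small probability, with only an $\exp(L\sqrt n)$ (not $\exp(cn)$) loss. The naive bound $\PROB\{B \mid A\} \le \PROB\{B\}/\PROB\{A\}$ is useless here since $\PROB\{A\} = e^{-\Theta(n)}$. Instead one must condition on $Y$ first, show $Y$ lies in a window of width $O(1)$ around $-v^*\sqrt n$ up to probability $1 - \exp(L\sqrt n - \epsilon^2 n)$ \emph{relative to the measure weighted by} $\Phi(-Y/\sqrt{n-2})^n$ (a tilted-measure / change-of-measure argument, exploiting the strict concavity of $R$ from Lemma~\ref{lem:estimatesforproof} to get a quadratic — hence $\epsilon^2 n$ — penalty away from $v^*$), and then, \emph{given} such a $Y$, run the ordinary i.i.d.\ concentration for $\tfrac1n\sum Z_i$ which no longer sees any rare conditioning. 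Making the tilting rigorous — in particular controlling the normalization and the contribution of the region where $Y$ is far from optimal — and checking that the two error budgets combine to the stated $L\sqrt n - \epsilon^2 n$ is the real work; the algebraic identity collapsing the location to $-v^*/2$ is, by contrast, a short computation using the definition of $\mu^*$ and the stationarity condition at $v^*$.
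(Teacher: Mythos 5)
Your decomposition $g = \sqrt{n-2}\,Z + Y\,\one_n$ (i.i.d. part plus common Gaussian shift) is a genuinely different — and arguably more transparent — route than the one in the paper. It exploits the equicorrelated covariance $C = (n-2)\Id_n + \one_n\one_n^T$ directly and immediately yields the one-dimensional Laplace integral
\[
\PROB\{\sigma\text{ locally optimal}\} = \EXP\big[\Phi(-Y/\sqrt{n-2})^n\big] = \int_{\R} \Phi(\lambda)^n\,\sqrt{\tfrac{n-2}{2\pi}}\,e^{-(n-2)\lambda^2/2}\,d\lambda.
\]
The paper instead inverts $C$ by Sherman--Morrison, massages the resulting orthant integral into $2^{-n}\sqrt{(n-2)/(2n-2)}\,\EXP\exp(\|N\|_1^2/4(n-1))$ with $N$ i.i.d.\ standard normal, and runs a Cram\'er/Laplace analysis of $\|N\|_1$ (Lemmas~\ref{lem:LDP}, \ref{lem:followsfromLaplace}, \ref{lem:estimatesforproof}). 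The two representations are dual: the paper's $\sup_x(x^2/4 - \mu^*(x))$ and your $\sup_\lambda(\log\Phi(\lambda) - \lambda^2/2)$ coincide by the saddle-point identity $x = 2\lambda$, $\lambda = \phi'(\lambda)$. Your route is closer to the classical treatment of Gaussian orthant probabilities and makes the conditioning-on-$Y$ picture vivid; the paper's route expresses everything in terms of an i.i.d.\ sum and so plugs more cleanly into textbook Cram\'er machinery.

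That said, there are two concrete issues. First, small but confusing errors: in the exponent you write $\sup_\lambda(\log\Phi(\lambda) - \lambda^2/2 + \lambda^2/2)$, which would collapse to $0$; the correct expression (after the Jacobian and the $(n-2)$-variance of $Y$ in $\lambda$-coordinates) is $\sup_\lambda(\log\Phi(\lambda) - \lambda^2/2)$, and the Laplace optimizer is $\lambda^\star = v^*/2$, not $v^*$. Consequently the conditional location formula should read $n^{-3/2}H(\sigma) \approx \tfrac12\big(\sqrt{\tfrac{n-2}{n}}\cdot \EXP[Z\mid Z\le \lambda^\star] - \lambda^\star\sqrt{\tfrac{n-2}{n}}\big)\to -\lambda^\star = -v^*/2$, using $\EXP[Z\mid Z\le\lambda]=-\phi'(\lambda)$ and $\phi'(\lambda^\star)=\lambda^\star$; your version has the threshold at $v^*$ and the wrong sign pattern. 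Second, and more importantly, the quantitative concentration with only an $\exp(L\sqrt n - \epsilon^2 n)$ loss is left entirely to the ``main obstacle'' paragraph: you correctly diagnose that one cannot naively divide by $\PROB\{A\}=e^{-\Theta(n)}$ and that a tilting/change-of-measure argument exploiting the strict concavity of the Laplace exponent is needed, but that argument — controlling the $\Phi(\lambda)^n$-weighted mass of $\lambda$ away from $\lambda^\star$, combining it with the conditional i.i.d.\ concentration of $n^{-1}\sum Z_i$, and tracking the $\sqrt n$-order error from the Cram\'er lower bound — is exactly the content of Lemmas~\ref{lem:LDP}--\ref{lem:estimatesforproof}, and is not carried out. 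As it stands this is a plausible and correct outline but not a proof; completing it would require essentially rebuilding those lemmas in your coordinates.
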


The values of the constants are numerically evaluated to be $\alpha^*\approx 0.199$ and $v^*/2\approx
0.506$. Since the global minimum of $H(\sigma)$ is about $- 0.763\,
n^{3/2}$, the typical value a local optimum $-0.506\,n^{3/2}$
comes fairly close. 

Also note that  Proposition \ref{prop:alphabounds} below implies that $\alpha^*$ is
between $1/(2\pi) \approx 0.1591\ldots$ and $2/(3\pi) \approx 0.2122\ldots$.

\subsection{Local minima, greedy algorithms and MaxCut}\label{sub:maxcut} 

Our problem is related to finding a local optimum of weighted MaxCut on the complete graph, which was recently studied in Angel, Bubeck, Peres, and Wei \cite{AnBuPeWe16}. Given $S\subset [n]$, we denote the value of the {\em cut} $(S,[n]\backslash S)$ as
\[{\rm Cut}(S,[n]\backslash S):= \sum_{i\in S}\sum_{j\in [n]\backslash S}\,W_{i,j}.\]
Note that there is a correspondence between cuts $(S,[n]\backslash S)$ and spin configurations $\sigma_S$ with:
\[\sigma_{S,i}:= 2\one_{i\in S} - 1 \,\,(i\in[n]).\]
\[{\rm Cut}(S,[n]\backslash S) = \frac{- H(\sigma_S) + \sum_{1\leq i<j\leq n}W_{ij}}{2}.\]
In particular, what Angel et al. call locally optimal cuts correspond exactly to our notion of local minimum. 

Starting from a given $\sigma$, do a sequence of local ``greedy moves"~ -- i.e. single spin flips that decrease energy -- until no more such moves are available. The main result of  \cite{AnBuPeWe16} is that this process ends at a local minimum after a polynomial number of moves. Unfortunately, it is not clear that the {\em distribution} of the value of this local minimum is similar to the one we study in Theorem \ref{thm:main}.

\section{The probability of local optimality}

% WORKING ON THIS

In this section we take the first and crucial step to prove Theorem \ref{thm:main}. For any fixed spin configuration $\sigma\in \{-1,+1\}^n$, we establish an integral formula for the probability that $\sigma$ is locally optimal. 

Define:
\begin{equation}\label{eq:defZi} Z_i(\sigma):= \frac{H(\sigma^{(i)}) - H(\sigma)}{2} = - \sum_{j\in [n]\backslash i}\sigma_i\,\sigma_j\,W_{i,j} \,\,(i\in[n]).\end{equation} 
Note that
\begin{equation}\label{eq:locmincondition} \sigma\mbox{ is a local minimum }\Leftrightarrow \forall i\in[n],\, Z_i(\sigma)\geq 0.\end{equation} 
Moreover, 
\begin{equation}\label{eq:locminenergy}-H(\sigma) =  \frac{1}{2}\sum_{i=1}^n\,\sum_{j\in [n]\backslash i}-\sigma_i\,\sigma_j\,W_{i,j} = \frac{\sum_{i=1}^nZ_i(\sigma)}{2}.\end{equation} 

Since $\sigma$ is fixed, we will write $Z_i$ instead of $Z_i(\sigma)$ most of the time. 

A key point in our calculations is that the random vector
\[Z=(Z_1,Z_2,\dots,Z_n)^T\] is a multivariate normal vector with zero mean and
covariance matrix $C = (C_{i,j})_{n\times n}$ such that $C_{i,i}= n-1$
for all $i\in [n]$ and $C_{i,j}=1$ for all $i\neq j$.
In other words,
\[
   C= (n-2) \Id_n + \one_n \one_n^T,
\]
where $\Id_n$ is the $n\times n$ identity matrix and $\one_n=(1,1,\ldots,1)^T$ is
the column vector with $1$ in each component. 

Clearly, the eigenvalues of $C$
are $2n-2$ with multiplicity $1$ and $n-2$ with multiplicity $n-1$, and therefore
$\det(C)=(2n-2)(n-2)^{n-1}$.

One may use the
Sherman-Morrison formula to invert $C$ and obtain
\[
   C^{-1} = \frac{1}{n-2}\left( \Id_n - \frac{1}{2n-2} \one_n \one_n^T\right)~,
\]
and therefore
\begin{eqnarray*}
\lefteqn{
\PROB\{ \sigma \ \text{is locally optimal} \}    } \\
& = &
\frac{1}{(2\pi)^{n/2} \det(\Sigma)^{1/2}}  \int_{[0,\infty)^n} \exp\left(\frac{-x^T C^{-1} x}{2}\right) dx \\
& = &
\frac{1}{(2\pi)^{n/2} (2n-2)^{1/2} (n-2)^{(n-1)/2}} \int_{[0,\infty)^n} \exp\left(\frac{- \|x\|_2^2}{2(n-2)} + \frac{ \|x\|_1^2}{2(n-2)(2n-2)}\right) dx
\\
& = &
2^{-n} \frac{1}{(2\pi)^{n/2} (2n-2)^{1/2} (n-2)^{(n-1)/2}}\int_{\R^n} \exp\left(\frac{- \|x\|_2^2}{2(n-2)} + \frac{ \|x\|_1^2}{2(n-2)(2n-2)}\right) dx~.
\end{eqnarray*}

% We may get a simple upper bound by using $\|x\|_1 \le \sqrt{n} \|x\|_2$. We obtain that the expression above is at most
% \begin{eqnarray*}
% 2^{-n} \frac{1}{(2\pi)^{n/2} (2n-2)^{1/2} (n-2)^{n-1/2}} \int_{\R^n} \exp\left(\frac{- \|x\|_2^2}{2(2n-2)} \right) dx \\
%  = 
% 2^{-n}  \frac{(2n-2)^{n/2}}{(2n-2)^{1/2} (n-2)^{n-1/2}} \sim 2^{-n/2} \sqrt{e/2}~,
% \end{eqnarray*}
% where we used the fact that
% \[
% \frac{1}{(2\pi)^{n/2} (2n-2)^{n/2}} \int_{\R^n} \exp\left(\frac{- \|x\|_2^2}{2(2n-2)} \right) dx = 1~.
% \]
We may rewrite this as:
\[
\PROB\{ \sigma \ \text{is locally optimal} \} = 2^{-n} \sqrt{\frac{n-2}{2n-2}} \;\EXP \exp\left( \frac{\|N\|_1^2}{4(n-1)} \right)
\]
where $N$ is a vector of $n$ independent standard normal random variables. 

In what follows, we derive some simple upper and lower bounds for the integral above. 

\begin{lemma}
If $N$ is a vector of independent standard normal random variables, then for all $\lambda >0$, 
\[
\lambda \EXP \|N\|_1^2 \le   \log \EXP \exp\left( \lambda \|N\|_1^2\right) 
\le \lambda \EXP \|N\|_1^2 \left(1+ \frac{n\lambda}{(1-n\lambda)} \right)~.
\]
\end{lemma}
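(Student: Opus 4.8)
The lower bound is just Jensen's inequality: since $x\mapsto e^{\lambda x}$ is convex, $\EXP e^{\lambda\|N\|_1^2}\ge e^{\lambda\EXP\|N\|_1^2}$, so $\log\EXP e^{\lambda\|N\|_1^2}\ge\lambda\EXP\|N\|_1^2$. For the upper bound I would first linearize the square by the elementary identity $e^{\lambda y^2}=\EXP_g\exp(\sqrt{2\lambda}\,g\,y)$, valid for a standard normal $g$; taking $g$ independent of $N$, using independence of the $|N_i|$ and $\EXP\exp(\theta|N_1|)=2e^{\theta^2/2}\Phi(\theta)$, this gives
\[
\EXP e^{\lambda\|N\|_1^2}=\EXP_g\big(2\,e^{\lambda g^2}\,\Phi(\sqrt{2\lambda}\,g)\big)^n=\EXP_g\exp\!\big(n\lambda g^2+n\,\phi(\sqrt{2\lambda}\,g)\big),
\]
with $\phi(\theta)=\log(2\Phi(\theta))$ as in the statement. (Here one needs $2n\lambda<1$, which is in any case required for the left-hand side to be finite.)

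Next, $\phi$ is concave: $\phi'(\theta)=\varphi(\theta)/\Phi(\theta)$ and $\phi''(\theta)=-\theta\phi'(\theta)-\phi'(\theta)^2\le 0$, the last step using $\Phi(\theta)\le\varphi(\theta)/|\theta|$ for $\theta<0$; also $\phi(0)=0$ and $\phi'(0)=\sqrt{2/\pi}$. Fix a base point $\theta_1\ge 0$; concavity gives the linear bound $\phi(\theta)\le p\theta+q$ for all $\theta$, where $p=\phi'(\theta_1)$ and $q=\phi(\theta_1)-\theta_1\phi'(\theta_1)\ge 0$. Plugging this into the display and computing the (elementary) Gaussian integral in $g$ yields
\[
\log\EXP e^{\lambda\|N\|_1^2}\ \le\ nq+\frac{n^2p^2\lambda}{1-2n\lambda}-\tfrac12\log(1-2n\lambda).
\]
Optimizing over $\theta_1$: writing $\beta=\tfrac{1-2n\lambda}{4n\lambda}$, the optimal choice is the maximizer $\theta_*$ of $\theta\mapsto\phi(\theta)-\beta\theta^2$ (which satisfies $\phi'(\theta_*)=2\beta\theta_*$), and then $nq+\tfrac{n^2p^2\lambda}{1-2n\lambda}$ collapses exactly to $n\sup_\theta(\phi(\theta)-\beta\theta^2)$.

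It then remains to show
\[
n\sup_\theta\big(\phi(\theta)-\beta\theta^2\big)-\tfrac12\log(1-2n\lambda)\ \le\ \lambda\EXP\|N\|_1^2\Big(1+\frac{n\lambda}{1-n\lambda}\Big)=\frac{\lambda\EXP\|N\|_1^2}{1-n\lambda}.
\]
Using $\EXP\|N\|_1^2=n+\tfrac{2}{\pi}n(n-1)$ this is a one-variable deterministic inequality for $\phi$, and I would prove it by exhibiting a quadratic $\theta\mapsto a\theta^2+b$ that lies above $\phi$ everywhere, with $a$ slightly below $\beta$ and $b$ essentially equal to $\tfrac{\lambda\EXP\|N\|_1^2}{n(1-n\lambda)}$; then $\sup_\theta(\phi(\theta)-\beta\theta^2)\le\sup_\theta((a-\beta)\theta^2+b)=b$. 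Such a quadratic exists because $\phi(\theta)=\sqrt{2/\pi}\,\theta-\tfrac1\pi\theta^2+O(\theta^3)$ near $0$ with $\tfrac2\pi>\tfrac12$, so the ``discriminant'' obstruction near $0$ leaves room for $a<\beta$, and a suitable tangent line of $\phi$ (say at $\theta_*$) can be slipped underneath the parabola to cover all $\theta$.

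The main obstacle is this last deterministic step. It genuinely uses the shape of $\Phi$ and not merely concavity of $\phi$: the crude bound $\phi(\theta)\le\sqrt{2/\pi}\,\theta$ only gives $\tfrac{(2/\pi)n^2\lambda}{1-2n\lambda}$ in the exponent, which is too large — it has $1-2n\lambda$, not $1-n\lambda$, in the denominator — so one must exploit the curvature of $\phi$ near $\theta_*$. In effect the inequality encodes the bound $\alpha^*\le\tfrac{2}{3\pi}$ quoted after Theorem~\ref{thm:main}, with only a few percent of slack, which is why the constant $\tfrac2\pi>\tfrac12$ is needed and also why the statement really requires $\lambda$ bounded away from $\tfrac1{2n}$.
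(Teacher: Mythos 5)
Your lower bound (Jensen) matches the paper's. For the upper bound your route is genuinely different and, up to the last step, technically sound: the paper applies the Gaussian logarithmic Sobolev inequality to $F(\lambda)=\EXP e^{\lambda\|N\|_1^2}$, obtaining $\lambda F'-F\log F\le 2n\lambda^2 F'$ from $\|\nabla\|x\|_1^2\|^2=4n\|x\|_1^2$, and then invokes a standard Herbst-type integration lemma (BLM Theorem~6.19 with $a=2n$, $b=0$), which delivers precisely the factor $1+n\lambda/(1-n\lambda)$. You instead Gaussian-linearize the square, factor over coordinates via the explicit Laplace transform of $|N_1|$, bound $\phi$ above by a tangent line, and evaluate the resulting Gaussian integral; that reduction and the optimization over the tangent point are carried out correctly.

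The gap is that you never close the argument. You have reduced the upper bound to the deterministic one-variable inequality
\[
n\sup_\theta\bigl(\phi(\theta)-\beta\theta^2\bigr)-\tfrac12\log(1-2n\lambda)\;\le\;\frac{\lambda\,\EXP\|N\|_1^2}{1-n\lambda},\qquad \beta=\frac{1-2n\lambda}{4n\lambda},
\]
and you explicitly concede that the easy bound $\phi(\theta)\le\sqrt{2/\pi}\,\theta$ is not strong enough (it gives denominator $1-2n\lambda$ rather than $1-n\lambda$), so the second-order behavior of $\phi$ must be used. You then say you ``would'' exhibit a dominating quadratic $a\theta^2+b$ with $a<\beta$ and the right $b$, but you do not construct it, do not verify that the coefficient constraints can be met simultaneously, and do not check the inequality over the full range of $\lambda\in(0,1/(2n))$ and $n$. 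That last step is the entire difficulty of the upper bound, and it is left as a plan rather than a proof, so the proposal does not establish the lemma. (You are right to flag the hidden constraint $2n\lambda<1$; the paper's ``for all $\lambda>0$'' should carry the same qualifier, since otherwise the left side is $+\infty$ while the stated right side is finite for $\lambda<1/n$. In the paper's application $\lambda=1/(4(n-1))$, so this causes no harm there.)
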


\begin{proof}
The inequality on the left-hand side is obvious from Jensen's inequality.
To prove the right-hand side, we use the Gaussian logarithmic Sobolev inequality. 
In particular, writing $f(x)=\|x\|_1^2$ and $F(\lambda)= \EXP \exp\left( \lambda f(N)\right)$,
the inequality on page 126 of Boucheron, Lugosi, and Massart \cite{BoLuMa13} asserts
that
\[
    \lambda F'(\lambda) - F(\lambda)\log F(\lambda) \le \frac{\lambda^2}{2}
       \EXP\left[ e^{\lambda f(N)} \|\nabla f(N) \|^2\right]~.
\]
Since $\|\nabla f(N)\|^2=4n\|N\|_1^2$, we obtain the differential inequality
\[
    \lambda F'(\lambda) - F(\lambda)\log F(\lambda) \le 2n\lambda^2 F'(\lambda)~.
%       \EXP\left[ e^{\lambda f(X)} \|\nabla f(X) \|^2\right]~.
\]
This inequality has the same form as the one at the top of page 191 of \cite{BoLuMa13}  with $a=2n$ and $b=0$
and Theorem 6.19 implies the result above.
\end{proof}

Since
\[
\EXP  \|N\|_1^2 = n+ n(n-1) \frac{2}{\pi}~,
\]
we get
\[
\PROB\{ \sigma \ \text{is locally optimal} \} \ge 2^{-n} \sqrt{\frac{n-2}{2n-2}} 
\exp\left( n/(4(n-1)) + \frac{n}{2\pi} \right)
\]
and 
\[
\PROB\{ \sigma \ \text{is locally optimal} \} \le 2^{-n} \sqrt{\frac{n-2}{2n-2}} 
\exp\left(  \left(n/(4(n-1)) + \frac{n}{2\pi} \right) \frac{4n-1}{3n-1} \right) 
\]
Summarizing, we obtain the following bounds
\begin{proposition}
\label{prop:alphabounds}
For all spin configurations $\sigma\in \{-1,1\}^n$,
\[
\frac{1}{2\pi} -\log 2 -O(1/n)  \le   \frac{1}{n} \log \PROB\{ \sigma \ \text{is locally optimal} \} \le 
\frac{2}{3\pi} -\log 2 +O(1/n)
%\frac{- \log 2}{2} + O(1/n)
\]
\end{proposition}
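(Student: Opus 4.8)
The plan is to combine the exact identity
\[
\PROB\{ \sigma \ \text{is locally optimal} \} = 2^{-n} \sqrt{\frac{n-2}{2n-2}} \;\EXP \exp\left( \frac{\|N\|_1^2}{4(n-1)} \right)
\]
established above with the two-sided bound of the Lemma, applied with the specific choice $\lambda = 1/(4(n-1))$. Before invoking the Lemma I would check that this $\lambda$ is admissible for the upper estimate: $n\lambda = \tfrac14\cdot\tfrac{n}{n-1}$ is decreasing in $n$ with maximum value $1/2$ at $n=2$, so $1-n\lambda \ge 1/2 > 0$ throughout. I would also record the elementary moment computation quoted in the excerpt: expanding $\|N\|_1^2 = \sum_i N_i^2 + \sum_{i\ne j}|N_i|\,|N_j|$ and using $\EXP N_i^2 = 1$, $\EXP|N_i| = \sqrt{2/\pi}$ together with independence gives $\EXP\|N\|_1^2 = n + n(n-1)\tfrac{2}{\pi}$, so that $\lambda\,\EXP\|N\|_1^2 = \frac{n}{4(n-1)} + \frac{n}{2\pi}$.

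For the lower bound, the left (Jensen) inequality of the Lemma yields $\log\EXP\exp(\lambda\|N\|_1^2) \ge \lambda\,\EXP\|N\|_1^2 = \frac{n}{4(n-1)} + \frac{n}{2\pi}$. Substituting into the identity, taking logarithms, dividing by $n$, and noting that $\frac1n\log 2^{-n} = -\log 2$ while the remaining contributions $\frac1n\log\sqrt{\tfrac{n-2}{2n-2}}$ and $\frac{1}{4(n-1)}$ are $O(1/n)$, gives $\frac1n\log\PROB\{\sigma\ \text{is locally optimal}\} \ge \frac{1}{2\pi} - \log 2 - O(1/n)$. For the upper bound, the right inequality of the Lemma gives $\log\EXP\exp(\lambda\|N\|_1^2) \le \lambda\,\EXP\|N\|_1^2\bigl(1 + \tfrac{n\lambda}{1-n\lambda}\bigr)$, where $1 + \tfrac{n\lambda}{1-n\lambda} = \tfrac{1}{1-n\lambda} = \frac43 + O(1/n)$ because $n\lambda\to 1/4$. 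Hence the exponent is $\bigl(\frac{n}{4(n-1)} + \frac{n}{2\pi}\bigr)\bigl(\frac43 + O(1/n)\bigr) = \frac{2n}{3\pi} + O(1)$; dividing by $n$ and again accounting for the $-\log 2$ term and the $O(1/n)$ prefactor contributions yields $\frac1n\log\PROB\{\sigma\ \text{is locally optimal}\} \le \frac{2}{3\pi} - \log 2 + O(1/n)$.

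There is no serious obstacle: the proposition is a direct bookkeeping consequence of the Lemma and the exact value of $\EXP\|N\|_1^2$. The only points requiring a little care are the admissibility condition $n\lambda < 1$ and the verification that the prefactor $\sqrt{(n-2)/(2n-2)}$, the ``diagonal'' term $\frac{n}{4(n-1)}$, and the discrepancy between $\frac{1}{1-n\lambda}$ and $\frac43$ all contribute only $O(1/n)$ after dividing by $n$. I would also flag, though it is not needed here, that these two bounds are not tight: by Theorem \ref{thm:main} the true limit $\alpha^* - \log 2$ lies strictly inside the interval $[\frac{1}{2\pi} - \log 2,\ \frac{2}{3\pi} - \log 2]$, and closing the gap requires replacing the crude Lemma by a genuine large-deviation analysis of $\|N\|_1$, carried out in the sections that follow.
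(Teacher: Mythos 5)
Your proof is correct and matches the paper's argument exactly: both use the identity $\PROB\{\sigma \text{ is loc.\ opt.}\} = 2^{-n}\sqrt{(n-2)/(2n-2)}\,\EXP\exp(\|N\|_1^2/(4(n-1)))$ together with the preceding Lemma at $\lambda = 1/(4(n-1))$ and the exact value $\EXP\|N\|_1^2 = n + n(n-1)\tfrac{2}{\pi}$, then divide by $n$ and absorb the prefactor and diagonal terms into $O(1/n)$. Your extra remark checking $n\lambda < 1$ is a sensible precaution that the paper leaves implicit, and the observation that $\frac{1}{1-n\lambda}\to\frac{4}{3}$ is the same simplification the paper records (there written as $\frac{4n-1}{3n-1}$, which is asymptotically the same factor).
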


%The constants on the two sides are about $-0.534$ and $-0.4809$.
%(The old upper bound was $-\log 2/2 \approx -0.3465$.)

In the next section we take a closer look at the integral expression
of the probability of local optimality. In fact, we prove that
$(1/n)\log \PROB\{ \sigma \ \text{is locally optimal} \}$ converges to
$\alpha^*-\log 2$ defined in the introduction.

\section{The value of local optima}

In this section we study, for any fixed $\sigma\in \{-1,+1\}^n$ and $\Delta
>0$, the joint probability
\[
   \PROB\left\{ \sigma \ \text{is locally optimal},\;n^{-3/2} H(\sigma)  \le - \Delta
     \right\}~.
\]
We let $Z=(Z_i)_{i=1}^n$ with $Z_i = Z_i(\sigma)$ as in the previous section. Recall from equations (\ref{eq:locmincondition}) and (\ref{eq:locminenergy}) that
\[\mbox{$\sigma$ is locally optimal}\Leftrightarrow \forall i\in [n],\, Z_i\geq 0\]
and
\[-\frac{H(\sigma)}{n^{3/2}} = \frac{1}{2n^{3/2}}\sum_{i=1}^nZ_i.\]
Therefore, we may follow the calculations in the previous section and obtain:
\begin{eqnarray*}
\lefteqn{
   \PROB\left\{ \sigma \ \text{is locally optimal}, n^{-3/2} H(\sigma)  \le - \Delta \right\}
} \\
& = &
\PROB\left\{ (\cap_{i=1}^n\{Z_i\geq 0\})\bigcap \left\{\sum_{i=1}^n Z_i \ge 2\Delta n^{3/2}\right\}\right\}   \\
& = &
\frac{1}{(2\pi)^{n/2} \det(C)^{1/2}}  \int\limits_{[0,\infty)^n\cap
  \{x:\sum_i x_i \ge 2\Delta n^{3/2}\}} \exp\left(\frac{-x^T C^{-1} x}{2}\right) dx \\
& = &
\frac{1}{(2\pi)^{n/2} (2n-2)^{1/2} (n-2)^{(n-1)/2}} \int\limits_{[0,\infty)^n\cap
  \{x:\sum_i x_i \ge 2\Delta n^{3/2}\}} \exp\left(\frac{- \|x\|_2^2}{2(n-2)} + \frac{ \|x\|_1^2}{2(n-2)(2n-2)}\right) dx
\\
& = &
2^{-n} \frac{1}{(2\pi)^{n/2} (2n-2)^{1/2}
  (n-2)^{(n-1)/2}}\int\limits_{\{x:\|x\|_1\ge 2\Delta n^{3/2}\}} \exp\left(\frac{- \|x\|_2^2}{2(n-2)} + \frac{ \|x\|_1^2}{2(n-2)(2n-2)}\right) dx~.
\end{eqnarray*}
Thus, by a change of variables, we get
\begin{eqnarray*}
\lefteqn{
\PROB\left\{ \sigma \ \text{is locally optimal},  n^{-3/2} H(\sigma)  \le - \Delta\right\}    } \\
& =  & 2^{-n} \sqrt{\frac{n-2}{2n-2}} 
\EXP \left[ \IND{\|N\|_1\ge 2\Delta n^{3/2}/\sqrt{n-2}}  \exp\left(
    \frac{\|N\|_1^2}{4(n-1)} \right) \right]~,
\end{eqnarray*}
where $N$ is a vector of independent standard normal random variables. 

We deduce the following proposition.

\begin{proposition}\label{prop:conditional}We have that, for all $\sigma\in \{-1,+1\}^n$,
\begin{equation}
\label{eq:key}
\PROB\left\{\left. -\frac{H(\sigma)}{n^{3/2}}\leq \Delta \right| \sigma \text{ is locally optimal} 
\right\}  = 
\frac{
\EXP \left[ \IND{\|N\|_1\ge 2\Delta n^{3/2}/\sqrt{n-2}}  \exp\left(
    \frac{\|N\|_1^2}{4(n-1)} \right) \right]}
{\EXP \exp\left(
    \frac{\|N\|_1^2}{4(n-1)} \right)}~.
\end{equation}\end{proposition}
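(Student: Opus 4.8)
The plan is to read off \eqref{eq:key} as the quotient of two identities already in hand. By the definition of conditional probability I would write
\[
\PROB\!\left\{\left.-\tfrac{H(\sigma)}{n^{3/2}}\ge\Delta\ \right|\ \sigma\text{ locally optimal}\right\}
=\frac{\PROB\{\sigma\text{ locally optimal},\ n^{-3/2}H(\sigma)\le-\Delta\}}{\PROB\{\sigma\text{ locally optimal}\}},
\]
using that $\{n^{-3/2}H(\sigma)\le-\Delta\}=\{-H(\sigma)/n^{3/2}\ge\Delta\}$. Then I would substitute, in the denominator, the closed form for $\PROB\{\sigma\text{ locally optimal}\}$ established in Section~2, and, in the numerator, the closed form for the joint probability obtained in the chain of equalities immediately preceding the statement. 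Both carry the prefactor $2^{-n}\sqrt{(n-2)/(2n-2)}$, which cancels, and the quotient collapses to the ratio of Gaussian expectations asserted in \eqref{eq:key}.

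Only two points genuinely need a word. First, one must match the conditioned event with the event appearing in the joint probability: on the locally-optimal event all $Z_i$ are nonnegative, so by \eqref{eq:locminenergy} one has $-H(\sigma)=\tfrac{1}{2}\sum_{i=1}^nZ_i=\tfrac{1}{2}\|Z\|_1$, whence $-H(\sigma)/n^{3/2}\ge\Delta$ is, given local optimality, exactly $\|Z\|_1\ge 2\Delta n^{3/2}$; and this is precisely the constraint that, after the symmetrization over orthants and the rescaling $x\mapsto\sqrt{n-2}\,x$ used to derive the numerator, turns into $\|N\|_1\ge 2\Delta n^{3/2}/\sqrt{n-2}$. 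Second, the denominator must be finite and positive for the quotient to make sense: applying the Lemma with $\lambda=1/(4(n-1))$ — for which $n\lambda<1$ whenever $n\ge2$ — gives $\EXP\exp(\|N\|_1^2/(4(n-1)))<\infty$, and positivity is immediate.

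I do not expect a real obstacle here. All of the analysis — the Sherman--Morrison inversion of $C$, the passage from an integral over $[0,\infty)^n$ to one over $\R^n$ at the cost of a factor $2^{-n}$, and the linear change of variables producing the exponent $-\|N\|_2^2/2$ together with the multiplicative weight $\exp(\|N\|_1^2/(4(n-1)))$ — has already been carried out in Sections~2 and~3; what remains for the proposition is only the bookkeeping of forming the ratio and correctly identifying the conditioned event.
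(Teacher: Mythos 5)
Your proof is correct and follows the same route the paper takes: the proposition is the quotient of the joint-probability identity derived immediately before it in Section~3 and the marginal identity from Section~2, with the common prefactor $2^{-n}\sqrt{(n-2)/(2n-2)}$ cancelling. You also rightly replaced the paper's $\leq\Delta$ by $\geq\Delta$ on the left-hand side (equivalently $n^{-3/2}H(\sigma)\le-\Delta$), which is what the numerator's event $\{\|N\|_1\ge 2\Delta n^{3/2}/\sqrt{n-2}\}$ actually corresponds to — the direction sign in the stated proposition is a typo.
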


\section{Approximating the integral}

In order to establish convergence of the exponent
$(1/n) \log \PROB\{ \sigma \ \text{is locally optimal} \}$ and also the
``typical''~value of the energy, we
need to understand the behavior of the numerator and the denominator 
of the key equation \eqref{eq:key}. 

The main idea is to obtain a Laplace-type approximation to the integral. Make the approximation
\[\EXP \left[ \exp\left(
    \frac{\|N\|_1^2}{4(n-1)} \right) \right]\approx \EXP\left[ \exp\left(
    \frac{\|N\|_1^2}{4n} \right) \right].\]
Observe that 
\[\frac{\|N\|_1}{n} = \frac{1}{n}\sum_{i=1}^n|N_i|\]
is an average of i.i.d. random variables expectation $\sqrt{2/\pi}$ and light tails. Therefore, it satisfies a Large Deviations Principle with a rate function $\mu^*(x)$:
\[\PROB\{\|N\|_1\geq nx\}\approx e^{-\mu^*(x)\,n}.\]
Readers familiar with Varadhan's Lemma (see e.g. \cite[page 32]{DenHollander_LD}) should expect that, as $n\to +\infty$,
\[\frac{1}{n}\log\,\EXP\left[ \exp\left(
    \frac{\|N\|_1^2}{4n} \right) \right] = \frac{1}{n}\log\,\EXP\left[ \exp\left(
    n\,\frac{(\|N\|_1/n)^2}{4} \right) \right] \to \sup_{v}\,\frac{v^2}{4} - \mu_*(v).\]
In fact, the intuition behind the Lemma is that most of the ``mass"~of the expectation concentrates around $\|N\|_1\sim v_*\,n$, where $v_*$ achieves the above supremum. This means that the conditional measure described in Proposition \ref{prop:conditional} should concentrate around $v^*/2$.

Our calculations confirm this reasoning. The usual statement of Varadhan's Lemma does not apply directly because $\|N\|_1^2/4$ is an unbounded function of $\|N\|_1$. Another minor technicality is that the function $\|N\|_1$ is divided by $(n-1)$ instead of $n$. In what follows we have opted for a self-contained approach  to our estimates, which gives quantitative bounds. This section collects the corresponding technical estimates. We finish the proof of Theorem \ref{thm:main} in the next section.

The next Lemma is a quantitative version of the large deviations principle (or Cram\'{e}r's Theorem) for $\|N\|_1$.

\begin{lemma}\label{lem:LDP}For $x\geq \sqrt{2/\pi}$, define $\mu^*(x)$ as in the introduction. Let $N=(N_1,\dots,N_n)$ be a vector of i.i.d. standard normal coordinates. 
Then:
\[\PROB\,\{\|N\|_1\geq nx\} = e^{-(\mu^*(x)+r_n(x))\,n}\]
with
\[0\leq r_n(x)\leq \kappa\,\left(\frac{x - \sqrt{2/\pi}}{\sqrt{n}} + \frac{1}{n}\right)\]
for some $\kappa>0$ independent of $x$ and $n$. Moreover, $\mu^*$ is smooth and $\mu^*(\sqrt{2/\pi})=0$.\end{lemma}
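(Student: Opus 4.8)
\textbf{Proof plan for Lemma \ref{lem:LDP}.}

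The plan is to obtain matching exponential upper and lower bounds for $\PROB\{\|N\|_1 \geq nx\}$ with explicit control on the subexponential corrections. Write $S_n = \|N\|_1 = \sum_{i=1}^n |N_i|$, a sum of i.i.d.\ random variables with common distribution that of $|N_1|$; note $\EXP|N_1| = \sqrt{2/\pi}$ and the moment generating function $M(\lambda) = \EXP e^{\lambda|N_1|}$ is finite for all $\lambda\in\R$ (indeed $M(\lambda) = 2e^{\lambda^2/2}\Phi(\lambda)$, so $\log M(\lambda) = \lambda^2/2 + \phi(\lambda)$ in the notation of the paper). Thus $\mu^*(x)$ is exactly the Cram\'er rate function for $S_n/n$, which immediately gives that $\mu^*$ is well defined, convex, smooth on the interior of its domain (by standard properties of Legendre transforms of smooth convex functions whose domain is all of $\R$), and that $\mu^*(\sqrt{2/\pi}) = 0$ since the supremum is then attained at $\lambda = 0$. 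For $x \geq \sqrt{2/\pi}$ the optimizing $\lambda = \lambda(x) \geq 0$ is characterized by $x = \lambda + \phi'(\lambda) = (\log M)'(\lambda)$.

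For the \emph{upper bound} $r_n(x)\geq 0$, I would apply the Chernoff bound: for the optimal $\lambda = \lambda(x)\geq 0$,
\[
\PROB\{S_n \geq nx\} \leq e^{-n\lambda x} M(\lambda)^n = e^{-n(\lambda x - \log M(\lambda))} = e^{-n\mu^*(x)},
\]
which gives $r_n(x)\geq 0$ with no error term at all. For the \emph{lower bound} I would use the classical Cram\'er tilting (change-of-measure) argument: define the tilted density $d\PROB_\lambda/d\PROB(u) = e^{\lambda |u|}/M(\lambda)$ for the variables $|N_i|$, under which the $|N_i|$ have mean exactly $x$. Writing $\PROB\{S_n \geq nx\} = M(\lambda)^n e^{-n\lambda x}\, \EXP_\lambda[ e^{-\lambda(S_n - nx)} \IND{S_n\geq nx}]$, one restricts the expectation to the event $\{nx \leq S_n \leq nx + c\sqrt{n}\}$, on which $e^{-\lambda(S_n-nx)} \geq e^{-\lambda c\sqrt n}$, and then lower-bounds $\PROB_\lambda\{nx \leq S_n \leq nx + c\sqrt n\}$ by a constant using the Berry--Esseen theorem (the tilted variables have finite third moment, uniformly bounded for $x$ in compact sets; the third moment's dependence on $x$ near $\sqrt{2/\pi}$ is what produces the $(x-\sqrt{2/\pi})/\sqrt n$ factor rather than a bound uniform in $x$). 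This yields $\PROB\{S_n\geq nx\} \geq C e^{-n\mu^*(x) - \lambda c\sqrt n}$, i.e.\ $r_n(x) \leq \kappa((x-\sqrt{2/\pi})/\sqrt n + 1/n)$ after bounding $\lambda = \lambda(x)$, noting $\lambda(\sqrt{2/\pi}) = 0$ and $\lambda$ is smooth so $\lambda(x) = O(x - \sqrt{2/\pi})$ near the left endpoint (and $\lambda(x)$ grows linearly in $x$ for large $x$, matched against the same behavior of the $c\sqrt n$ term's coefficient — one must check the product $\lambda(x)\cdot(\text{something}) $ stays linear in $x-\sqrt{2/\pi}$; an alternative is to absorb large $x$ into the $1/n$ term being negligible compared to $\mu^*(x)\sim x^2/2$).

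The main obstacle is making the lower bound's error term \emph{uniform in $x$} with the precise stated form $\kappa((x-\sqrt{2/\pi})/\sqrt n + 1/n)$. Near $x = \sqrt{2/\pi}$ this is delicate because $\lambda(x)\to 0$ and one wants the error to vanish linearly, which requires tracking how the Berry--Esseen constant (depending on the tilted third absolute moment divided by the $3/2$ power of the tilted variance) behaves as $\lambda\to 0$ — it converges to a finite limit, so this is fine, but the bookkeeping must be done carefully. For large $x$ one needs the $c\sqrt n$ window width times $\lambda(x)$ to not overwhelm the claimed bound; here it is cleanest to note that for $x$ bounded away from $\sqrt{2/\pi}$ the factor $(x-\sqrt{2/\pi})/\sqrt n$ already dominates $1/n$, and $\lambda(x)(x - \sqrt{2/\pi})^{-1}$ together with the window width stays bounded on compacts, while for genuinely large $x$ one can take the window width $c$ itself shrinking (or simply observe the bound is only needed to be used later with $x$ in a bounded range around $v^*$). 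I would state the lemma's bound as is and prove it by splitting into the regime $x \in [\sqrt{2/\pi}, \sqrt{2/\pi}+1]$ (where the linear term is the active one and smoothness of $\lambda(\cdot)$ at the endpoint does the work) and $x \geq \sqrt{2/\pi}+1$ (where $\mu^*(x)$ is large and crude bounds suffice). The smoothness of $\mu^*$ and the boundary value $\mu^*(\sqrt{2/\pi})=0$ follow from standard convex-analysis facts about Legendre transforms and need only be cited.
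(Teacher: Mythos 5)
Your plan is correct in spirit and the upper bound (Chernoff) is identical to the paper's. For the lower bound you propose the classical Cram\'er/Bahadur--Rao route: exponential tilting plus a Berry--Esseen local bound for the tilted sum. The paper instead uses a purely moment-generating-function lower bound, namely Theorem~A.2.1 from Alon--Spencer (Lemma~\ref{lem:alsp}), applied with $\lambda=\lambdastar(a/n)$, $\epsilon=\sqrt{2/n}$, $u=\sqrt{2n}$, $a=nx+\sqrt{2n}$. Combined with the explicit quadratic control
$\mu^*(x)-(\lambda x-\log M(\lambda))\in[(\lambda-\lambdastar(x))^2/40,(\lambda-\lambdastar(x))^2/2]$ from Lemma~\ref{lem:lambdastar}, this gives the ratio bounds $g_a(\lambdastar\pm\epsilon)/g_a(\lambdastar)\le e^{n\epsilon^2/2}$ and hence the desired lower bound with no CLT or Berry--Esseen input at all; all constants stay explicit. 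Both methods work, but the paper's choice sidesteps exactly the uniformity bookkeeping you flag as the main obstacle. Incidentally, several of your worries are overstated: the $(x-\sqrt{2/\pi})/\sqrt n$ term comes entirely from the factor $\lambdastar(x)\cdot u/n$ together with the \emph{global} bound $\lambdastar(x)\le 20\,(x-\sqrt{2/\pi})$ (a consequence of $1\le\lambdastar'\le 20$ established via Lemma~\ref{lem:phidoubleprime}), not from any delicate dependence of the third moment on $x$, and this linear bound already handles arbitrarily large $x$ without splitting into regimes or ``absorbing into $1/n$.'' If you do carry out the Berry--Esseen version, you should note that the Berry--Esseen constant only contributes an additive $O(1/n)$ to $r_n$ (since $-\log C$ is divided by $n$), and that the tilted law of $|N_1|$ is a Gaussian restricted to $[0,\infty)$ whose variance and third absolute moment are bounded above and below uniformly over all $\lambda\ge 0$, so the constant really is uniform.
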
 
\begin{proof}This follows directly from Lemmas \ref{lem:absnormal}, \ref{lem:lambdastar} and \ref{lem:chernoff} in subsection \ref{sec:technical}.\end{proof}

We will use this Lemma to estimate expectations of the form: 
\[\EXP\left[ \exp\left(
    \frac{c\|N\|_1^2}{2n} \right)\IND{\|N\|_1\leq an}\right]\mbox{ and }\EXP\left[ \exp\left(
    \frac{c\|N\|_1^2}{2n} \right)\IND{\|N\|_1\geq bn}\right].\]

The function $R_c$ defined below naturally shows up in our estimates. 
\begin{equation}\label{eq:defRc}R_c(x):=\frac{cx^2}{2} - \mu^*(x).\,\,\,(x\geq \sqrt{2/\pi}).\end{equation}

\begin{lemma}\label{lem:followsfromLaplace}For $a\geq \sqrt{2/\pi}$, $c\geq 0$
\[\EXP\left[ \exp\left(
    \frac{c\|N\|_1^2}{2n} \right)\IND{\|N\|_1\leq an}\right]  = (I) + (II),\]
    where 
    \[1\leq (I)\leq \exp\left(n\,R_c(\sqrt{2/\pi})\right)\]
    and
    \[ (II) = cn\int_{\sqrt{2/\pi}}^{a} x\,\exp(n\,(R_c(x)-r_{n}(x)))\,dx.\]
with $r_n(x)$ is as in Lemma \ref{lem:LDP}. For $b\geq \sqrt{2/\pi}$,
\begin{eqnarray*}\EXP\left[ \exp\left(
    \frac{c\|N\|_1^2}{2n} \right)\IND{\|N\|_1\geq bn}\right] &=&  \exp\left\{n\,(R_c(b) -r_n(b))\right\} \\ & & + cn\int_{b}^{+\infty} x\,\exp\left\{n\,(R_c(x)-r_n(x))\right\}\,dx.\end{eqnarray*}\end{lemma}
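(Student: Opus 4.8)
\textbf{Proof plan for Lemma \ref{lem:followsfromLaplace}.}

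The plan is to reduce both expectations to one-dimensional integrals over the value $x = \|N\|_1/n$ by combining the tail estimate of Lemma \ref{lem:LDP} with an integration-by-parts (layer-cake) argument. First I would write $g(x) := \exp(c n x^2/(2n)) = \exp(cx^2/2)$, so that for any threshold $t$ the quantity $\exp(c\|N\|_1^2/(2n))\IND{\|N\|_1 \geq t n}$ has expectation $\EXP[g^{\,n}(\|N\|_1/n)\IND{\|N\|_1/n \geq t}]$, where I am abusing notation slightly: the integrand is $\exp(n \cdot c(\|N\|_1/n)^2/2)$. Setting $S = \|N\|_1/n$ and $G(x) = \exp(n c x^2/2)$, the Stieltjes integral $\EXP[G(S)\IND{S \geq b}] = \int_{[b,\infty)} G(x)\, d\PROB\{S \leq x\}$ can be integrated by parts. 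Using $\PROB\{S \geq x\} = e^{-(\mu^*(x)+r_n(x))n}$ from Lemma \ref{lem:LDP} for $x \geq \sqrt{2/\pi}$ (and noting $\PROB\{S \geq x\} = 1$ for $x \leq \sqrt{2/\pi}$, with $b \geq \sqrt{2/\pi}$ here), the boundary term at $x = b$ produces $G(b)\,\PROB\{S \geq b\} = \exp(n(R_c(b) - r_n(b)))$ since $R_c(b) = cb^2/2 - \mu^*(b)$, and the remaining integral becomes $\int_b^\infty G'(x)\,\PROB\{S \geq x\}\,dx = \int_b^\infty cn x \exp(n c x^2/2) \cdot e^{-(\mu^*(x)+r_n(x))n}\,dx = cn\int_b^\infty x\exp(n(R_c(x) - r_n(x)))\,dx$. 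The boundary term at $+\infty$ vanishes because $\mu^*$ grows faster than quadratically (this needs the smoothness/growth of $\mu^*$ from Lemma \ref{lem:LDP}, or can be handled by first truncating and passing to the limit). This gives the second displayed identity exactly.

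For the first (bounded) expectation, I would split $\EXP[G(S)\IND{S \leq a}]$ at $\sqrt{2/\pi}$: writing it as $\EXP[G(S)\IND{S \leq \sqrt{2/\pi}}] + \EXP[G(S)\IND{\sqrt{2/\pi} < S \leq a}] =: (I) + (II)$. For $(I)$, since $G$ is increasing and $G(0) = 1$, we have $\PROB\{S \leq \sqrt{2/\pi}\} \leq (I) \leq G(\sqrt{2/\pi})\PROB\{S \leq \sqrt{2/\pi}\} \leq \exp(n c (2/\pi)/2) = \exp(n(c/\pi))$; but more carefully, using that $R_c(\sqrt{2/\pi}) = c/\pi - \mu^*(\sqrt{2/\pi}) = c/\pi$ by Lemma \ref{lem:LDP}, and that $(I) \geq \EXP[\IND{S \leq \sqrt{2/\pi}}] \geq$ (something close to $1/2$, hence $\geq 1$ after absorbing... actually the stated bound is $1 \leq (I)$) — here I would note $G(S) \geq 1$ always and $\PROB\{S \leq \sqrt{2/\pi}\} \to 1/2$, so to get the clean bound $(I) \geq 1$ one instead observes $(I) = \EXP[G(S)\IND{S\le\sqrt{2/\pi}}]$ and uses $G(S)\ge 1$ together with... this is the one place requiring care: I would instead define $(I)$ slightly differently or note that the contribution of $S$ near $0$ makes $(I) \geq 1$ because $G(S) \geq 1$ and the probability mass is bounded below appropriately; the cleanest route is $(I) \ge \PROB\{S\le 0\}^{-1}$-type reasoning is not available, so I would simply check numerically/analytically that $\EXP[G(S)\IND{S \le \sqrt{2/\pi}}] \ge 1$ holds since $G \geq 1$ on the whole range and $\PROB\{S \le \sqrt{2/\pi}\} \ge 1/2$ combined with $G$ being appreciably larger than $1$ on a set of non-negligible mass — or, more robustly, absorb the factor of $2$ loss elsewhere. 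For $(II)$, I would repeat the integration-by-parts computation on the interval $(\sqrt{2/\pi}, a]$: the boundary term at $\sqrt{2/\pi}$ contributes $G(\sqrt{2/\pi})\cdot 1 = \exp(nc/\pi) = \exp(nR_c(\sqrt{2/\pi}))$ wait — that would make $(II)$ carry the main term, not $(I)$; so actually the split should put the $x = \sqrt{2/\pi}$ boundary contribution into $(I)$, giving $(I) \le \exp(nR_c(\sqrt{2/\pi}))$ and $(II) = cn\int_{\sqrt{2/\pi}}^a x\exp(n(R_c(x)-r_n(x)))\,dx$ with no boundary term (the upper boundary at $a$ cancels against... no). Let me restate: integrating by parts on $(\sqrt{2/\pi}, a]$ with $d\PROB\{S\le x\} = -d\PROB\{S \ge x\}$, the boundary terms are $G(a)\PROB\{S \le a\} - G(\sqrt{2/\pi})\PROB\{S \le \sqrt{2/\pi}\}$ plus $\int G'(x)\PROB\{S\le x\}dx$; rewriting $\PROB\{S\le x\} = 1 - \PROB\{S \ge x\}$ and using $\int_{\sqrt{2/\pi}}^a G'(x)dx = G(a) - G(\sqrt{2/\pi})$, the terms reorganize into $\PROB\{S\le\sqrt{2/\pi}\}(\text{bounded by }G(\sqrt{2/\pi}))$ plus $cn\int x\exp(n(R_c(x)-r_n(x)))dx$; the first piece is folded into $(I)$ via the bound $(I) \le \exp(nR_c(\sqrt{2/\pi}))$ using $R_c(\sqrt{2/\pi}) = c/\pi$ and $G(\sqrt{2/\pi}) = \exp(nc/\pi)$.

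\textbf{Main obstacle.} The genuinely delicate point is the lower bound $(I) \geq 1$ together with matching the exact constants in the boundary terms; the layer-cake manipulation is routine once one is careful about which half-line carries $\PROB = 1$ versus the Chernoff decay, but bookkeeping the $x = \sqrt{2/\pi}$ crossover so that $(I)$ absorbs exactly the right boundary contribution (and no more) requires writing $\PROB\{S \le x\} = 1 - \PROB\{S \ge x\}$ and tracking cancellations precisely. The convergence of the integral at $+\infty$ for the second identity is a secondary technical point, resolved by the superquadratic growth of $\mu^*$ (equivalently, $R_c(x) \to -\infty$), which follows from the Gaussian tails underlying Lemma \ref{lem:LDP}; if one prefers to avoid invoking growth of $\mu^*$ directly, one truncates at $\|N\|_1 \le Mn$, applies the finite version, and lets $M \to \infty$ using monotone convergence on the left and the fact that $\EXP\exp(\|N\|_1^2/(4n))$ itself is finite (which is immediate since each $\exp(N_i^2/(4n))$ has finite expectation for $n \ge 1$ — indeed for $n\ge 1$, $\EXP e^{N_i^2/(4n)} < \infty$ requires $1/(2n) < 1/2$, i.e. $n > 1$, so one takes $n \ge 2$).
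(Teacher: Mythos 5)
Your approach --- layer-cake/integration by parts combined with the tail estimate of Lemma \ref{lem:LDP} --- is the same one the paper uses, and your treatment of the second identity (the $\IND{\|N\|_1 \ge bn}$ case) is essentially correct. The genuine gap is in your interpretation of $(I)$ and $(II)$, and that misreading is the source of every difficulty you flag.

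The paper does \emph{not} set $(I) = \EXP[G(S)\IND{S \le \sqrt{2/\pi}}]$ and $(II) = \EXP[G(S)\IND{\sqrt{2/\pi} < S \le a}]$. It first performs one integration by parts on the full range, writing (with $S := \|N\|_1/n$, $G(x) := e^{cnx^2/2}$, $G(0)=1$)
\[
\EXP\left[G(S)\IND{S\le a}\right] = 1 + \int_0^a G'(x)\,\PROB\{S \ge x\}\,dx = 1 + cn\int_0^a x\,e^{cnx^2/2}\,\PROB\{S\ge x\}\,dx,
\]
and only \emph{then} splits this one-dimensional integral at $\sqrt{2/\pi}$:
\[
(I) := 1 + cn\int_0^{\sqrt{2/\pi}} x\,e^{cnx^2/2}\,\PROB\{S\ge x\}\,dx, \qquad (II) := cn\int_{\sqrt{2/\pi}}^a x\,e^{cnx^2/2}\,\PROB\{S\ge x\}\,dx.
\]
With this definition the lower bound $(I) \ge 1$ is immediate (the integrand is nonnegative), so all your worry about $\PROB\{S\le\sqrt{2/\pi}\}\to 1/2$ and ``absorbing factors of $2$'' evaporates. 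The upper bound $(I) \le \exp\!\bigl(nR_c(\sqrt{2/\pi})\bigr)$ follows by bounding $\PROB\{S\ge x\}\le 1$ and computing $1 + cn\int_0^{\sqrt{2/\pi}} x\,e^{cnx^2/2}\,dx = e^{cn(2/\pi)/2} = e^{nR_c(\sqrt{2/\pi})}$, the last step using $\mu^*(\sqrt{2/\pi})=0$ from Lemma \ref{lem:LDP}. The formula for $(II)$ is then just the direct substitution of Lemma \ref{lem:LDP}, which is valid throughout the integration range $x\ge\sqrt{2/\pi}$. Your alternative reading cannot be salvaged: with $(II)=\EXP[G(S)\IND{\sqrt{2/\pi}<S\le a}]$, integration by parts on that interval leaves boundary contributions $G(\sqrt{2/\pi})\PROB\{S\ge\sqrt{2/\pi}\} - G(a)\PROB\{S\ge a\}$ that neither cancel nor appear in the stated form of $(II)$, which is exactly why your attempted bookkeeping at the $\sqrt{2/\pi}$ crossover kept looping back on itself.
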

\begin{proof}Let $\phi_{c,n}(x) = e^{cn\,x^2/2}$. Note that:
\[ \IND{\|N\|_1\leq a\,n}  \exp\left(\frac{cn\,\|N\|_1^2}{2n}\right) = \phi_{c,n}\left(\frac{\|N\|_1}{n}\right)\,\IND{\frac{\|N\|_1}{n}\leq a}.\]
 We may compute the expectation of this expression as follows.
 \begin{eqnarray*}\EXP\left[\IND{\|N\|_1\leq a\,n}  \exp\left(
    \frac{cn\,\|N\|_1^2}{2n}\right)\right] &=& 1 + \int_{0}^a\,\phi'_{c,n}(x)\,\PROB\left\{\frac{\|N\|_1}{n}\geq x\right\}\,dx\\
	&=&  1 + cn \int_0^a\,\exp\left(\frac{cn\,x^2}{2}\right)\,\PROB\left\{\frac{\|N\|_1}{n}\geq x\right\}\,dx.\end{eqnarray*}
We split the above integral in two parts. 
\begin{eqnarray*}(I) &=& 1 + cn \int_0^{\sqrt{2/\pi}}\,x\,\exp\left(\frac{cn\,x^2}{2}\right)\,\PROB\left\{\frac{\|N\|_1}{n}\geq x\right\}\,dx\\
(II) &=& cn\,\int_{\sqrt{2/\pi}}^{a} \,x\,\exp\left(\frac{cn\,x^2}{2}\right)\,\PROB\left\{\frac{\|N\|_1}{n}\geq x\right\}\,dx.\end{eqnarray*}
For part (I), we bound the probability in the integral by $1$, and obtain:
\[1\leq (I)\leq 1 + cn \int_0^{\sqrt{2/\pi}}\,x\,\exp\left(\frac{cn\,x^2}{2}\right)\,dx \leq \exp\left(\frac{cn\,x^2}{2}\right)|_{x=\sqrt{\frac{2}{\pi}}} =  \exp\left(n\,R_c(\sqrt{2/\pi})\right)\]
because $\mu^*(\sqrt{2/\pi})=0$.
Term (II) may be evaluated using the estimate from Lemma \ref{lem:LDP}.
\[ (II) = cn\int_{\sqrt{2/\pi}}^{a} x\,\exp\left(\frac{cn\,x^2}{2} - n\,\mu^*(x) - n\,r_{n}(x)\right)\,dx,\]
which has the desired form because 
\[\frac{cn\,x^2}{2} - n\,\mu^*(x) = n\,R_c(x).\]

Similarly, \[ \IND{\|N\|_1\geq b\,n}  \exp\left(\frac{cn\,\|N\|_1^2}{2n}\right) = \phi_{c,n}\left(\frac{\|N\|_1}{n}\right)\,\IND{\frac{\|N\|_1}{n}\geq b},\]
and we finish the proof via the identity \begin{eqnarray*}\EXP\left[\IND{\|N\|_1\geq b\,n}  \exp\left(
    \frac{cn\,\|N\|_1^2}{2n}\right)\right] &=& \phi_{c,n}(b)\,\PROB\left\{\frac{\|N\|_1}{n}\geq b\right\} \\ & & + \int_{b}^{+\infty}\,\phi'_{c,n}(x)\,\PROB\left\{\frac{\|N\|_1}{n}\geq x\right\}\,dx\end{eqnarray*}
    and using the bounds in Lemma \ref{lem:LDP} (which are valid for all $x\geq b\geq \sqrt{2/\pi}$).\end{proof}

\section{Proof of main Theorem}

The previous section shows that, in order to estimate the expectations in Lemma \ref{lem:followsfromLaplace}, we need to understand the function $R_c$. The case of interest for us is when $c=n/2\,(n-1)$, which is when we recover the expectations in (\ref{eq:key}). Since $c$ varies with $n$, we will consider instead:
\begin{equation}\label{eq:defR14}R(x) = R_{\frac{1}{2}}(x):= \frac{x^2}{4} - \mu^*(x)\,\,(x\geq \sqrt{2/\pi}).\end{equation}
and note that 
\begin{equation}\label{eq:wrongRc}R(x)\leq R_c(x)\leq R(x) + (2c-1)\,\frac{x^2}{4}.\end{equation}

The next Lemma contains some information on $R(x)$.  
  
\begin{lemma}\label{lem:estimatesforproof}Let $x\geq \sqrt{2/\pi}$. Define $R$ as in equation (\ref{eq:defR14}) and $\mu^*$ as in Lemma \ref{lem:LDP}. Then there exists a unique $x=v^*>\sqrt{2/\pi}$ that maximizes $R(x)$ over $x\geq \sqrt{2/\pi}$. Leting $\alpha^*:=R(v^*)$ denote the value of the maximum, for any $x\geq \sqrt{2/\pi}$, there exists $\theta(x)\in [1/4,10]$ with:
\[R(x) - \alpha^*= -\theta(x)\,(x-v^*)^2.\]\end{lemma}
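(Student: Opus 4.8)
\textbf{Proof plan for Lemma \ref{lem:estimatesforproof}.}

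The plan is to work entirely with the function $R(x) = x^2/4 - \mu^*(x)$ and its derivatives on $[\sqrt{2/\pi},\infty)$, using the variational description of $\mu^*$ coming from Lemma \ref{lem:LDP}. First I would record the basic facts about $\mu^*$: it is the Fenchel--Legendre transform $\mu^*(x) = \sup_{\lambda\ge 0}(\lambda x - \lambda^2/2 - \phi(\lambda))$ with $\phi(\lambda)=\log(2\Phi(\lambda))$, it is smooth (as asserted in Lemma \ref{lem:LDP}), and $\mu^*(\sqrt{2/\pi})=0$. By standard convex-duality, for $x>\sqrt{2/\pi}$ the supremum is attained at a unique $\lambda_*(x)>0$ solving $x = \lambda_*(x) + \phi'(\lambda_*(x))$, and then $(\mu^*)'(x)=\lambda_*(x)$. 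Since $\phi'(\lambda) = \varphi(\lambda)/\Phi(\lambda)$ (with $\varphi$ the standard normal density) is positive and decreasing, the map $\lambda\mapsto \lambda+\phi'(\lambda)$ is increasing, so $\lambda_*(x)$ is well-defined, smooth, increasing, and $\lambda_*(\sqrt{2/\pi})=0$. Also $(\mu^*)''(x) = \lambda_*'(x) = 1/(1+\phi''(\lambda_*(x)))$; note $\phi'' = (\log(2\Phi))'' < 0$, and one checks $1+\phi''(\lambda)>0$ for all $\lambda\ge 0$ (equivalently $\phi''>-1$), so $\mu^*$ is strictly convex with $(\mu^*)''(x)\ge 1$.

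Next I would analyze $R'(x) = x/2 - \lambda_*(x)$. At $x=\sqrt{2/\pi}$ we have $R'(\sqrt{2/\pi}) = \sqrt{2/\pi}/2 - 0 > 0$, so $R$ is initially increasing. Since $R''(x) = 1/2 - (\mu^*)''(x) \le 1/2 - 1 = -1/2 < 0$, the function $R$ is strictly concave on the whole interval, hence $R'$ is strictly decreasing; combined with $R'(\sqrt{2/\pi})>0$ and the fact that $\lambda_*(x)\to\infty$ as $x\to\infty$ (so $R'(x)\to-\infty$), there is a unique critical point $v^*$, which is the unique global maximizer, and $v^*>\sqrt{2/\pi}$. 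Set $\alpha^* := R(v^*) > R(\sqrt{2/\pi}) = 2/(4\pi) - 0 = 1/(2\pi) > 0$.

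For the quadratic expansion, I would apply Taylor's theorem with integral remainder around $v^*$: since $R'(v^*)=0$,
\[
R(x) - \alpha^* = \frac{(x-v^*)^2}{2}\int_0^1 2(1-t)\,R''\big(v^* + t(x-v^*)\big)\,dt = -\theta(x)\,(x-v^*)^2,
\]
where $\theta(x) = -\tfrac12\int_0^1 2(1-t) R''(v^*+t(x-v^*))\,dt$ is a weighted average of $-R''/2$. The task then reduces to bounding $-R''(y) = (\mu^*)''(y) - 1/2$ from below and above uniformly over $y\ge\sqrt{2/\pi}$, giving $\theta(x)\in[1/4,10]$: the lower bound $\theta(x)\ge 1/4$ follows from $(\mu^*)''(y)\ge 1$, hence $-R''(y)/2 \ge 1/4$; the upper bound needs $(\mu^*)''(y)$ to stay bounded, i.e. $\phi''(\lambda)$ bounded away from $-1$ uniformly in $\lambda\ge 0$. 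The main obstacle is exactly this last uniform estimate on $\phi''$: one must show $\sup_{\lambda\ge 0}(\mu^*)''(y) = \sup_{\lambda\ge 0} 1/(1+\phi''(\lambda))$ is at most, say, $10+1/2$, equivalently $1+\phi''(\lambda)\ge 1/(10.5)$ for all $\lambda\ge 0$. This is a concrete one-variable calculus estimate on the Mills-ratio-type function $g(\lambda)=\varphi(\lambda)/\Phi(\lambda)$ (using $\phi'=g$, $\phi''=g'=-\lambda g - g^2$, so one needs $1-\lambda g(\lambda)-g(\lambda)^2 \ge 1/10.5$); it holds because $g(\lambda)\to 0$ as $\lambda\to\infty$ and $g(0)=\sqrt{2/\pi}$, with the worst case near $\lambda=0$, and can be verified either by monotonicity arguments for $\lambda g + g^2$ or by a crude numerical bound — the constants $1/4$ and $10$ are deliberately loose to absorb slack here. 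Everything else is routine once smoothness of $\mu^*$ and the formula $(\mu^*)''=\lambda_*'$ are in hand from Lemma \ref{lem:LDP} and its supporting lemmas.
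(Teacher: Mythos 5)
Your proposal is correct and takes essentially the same route as the paper: strict concavity of $R$ from $R''(x)=1/2-\lambda_*'(x)$ with $1\le\lambda_*'\le 20$ (equivalently $\phi''$ bounded away from $-1$, proved in Lemma~\ref{lem:phidoubleprime}), positivity of $R'(\sqrt{2/\pi})$ to place $v^*$, and Taylor expansion around $v^*$ to extract $\theta(x)$. The only quibble is a small bookkeeping slip at the end — to get $\theta\le 10$ you need $(\mu^*)''\le 20.5$, not $10.5$ — but this only makes your target inequality stricter than necessary, and it still holds, so the argument is unaffected.
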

\begin{proof}See subsection \ref{sub:estimatesforproof}.\end{proof}

We can now obtain good upper and lower estimates on the integral expressions in Lemma \ref{lem:followsfromLaplace} and finish the proof of the main Theorem.

\begin{proof}[of Theorem \ref{thm:main}] In this proof we assume $n\geq 100$ for simplicity. We will use the notation $L>0$ to denote the value of a constant independent of $n$ that may change from line to line. Finally, we set 
\[c=c_n := \frac{n}{2\,(n-1)} = 1 + \frac{1}{2\,(n-1)}.\] Lemma \ref{lem:estimatesforproof} and (\ref{eq:wrongRc}) give:
\begin{equation}\label{eq:concavity}\forall x\geq \sqrt{\frac{2}{\pi}}\,:\,R_c(x)-\alpha^* \in\left[- 10\,(x-v^*)^2 ,- \frac{1}{6}\,(x-v^*)^2 + \frac{x^2}{(n-1)}\right].\end{equation}
We will now apply this to estimate expectations to the left of $v^*$. That is, we consider:
\[\EXP\left[ \exp\left(
    \frac{c\|N\|_1^2}{2n} \right)\IND{\|N\|_1\leq an}\right], \,\sqrt{\frac{2}{\pi}}\leq a\leq v^*.\]
In this range $|a-v^*|$ is uniformly bounded, so $x^2\leq L$ and
\[\forall \sqrt{\frac{2}{\pi}}\leq x\leq v^*\,:\,0\leq r_n(x)\leq \frac{L}{\sqrt{n}}.\]
Combining Lemma \ref{lem:followsfromLaplace} with $c\leq 1$ and (\ref{eq:concavity}), we obtain:
\begin{eqnarray*}\frac{\EXP\left[ \exp\left(
    \frac{c\|N\|_1^2}{2n} \right)\IND{\|N\|_1\leq an}\right]}{\exp(n\,\alpha^*)}&\leq& \exp(n\,(R_c(\sqrt{2/\pi})-\alpha^*)) \\ & & + cn \int_{\sqrt{2/\pi}}^a x\,\exp\left(n\,(R_c(x) - \alpha^*)\right)\,dx\\ 
    \\ &\leq & \exp\left(L - \frac{(v^*-\sqrt{2/\pi})^2}{4}\,n\right) \\ & & + n \int_{\sqrt{2/\pi}}^a x\,\exp\left(L + n\,\frac{(v^*-x)^2}{6}\right)\,dx\\ 
    &\leq & L\,(1+cn)\,\exp\left(L- \frac{(a-v^*)^2\,n}{4}\right)\\ &\leq & \exp\left(L\log n\, - \frac{(a-v^*)^2\,n}{4}\right).\end{eqnarray*}
    At the same time, 
\begin{eqnarray*}   \frac{ \EXP\left[ \exp\left(
    \frac{c\|N\|_1^2}{2n} \right)\IND{\|N\|_1\leq v^*n}\right]}{{\exp(n\,\alpha^*)}} &\geq &  \exp\left(-L\sqrt{n}\right)\,\int_{v^*-\frac{1}{n}}^{v^*} x\,\exp\left(n\,(R_c(x) - \alpha^*)\right)\,dx
    \\  &\geq &  \frac{1}{n}\,\left(v^*-\frac{1}{n}\right)\,\frac{\exp\left(-L\sqrt{n}\, - 10\,(1/n)^2\right)}{n}\\ &\geq &  \exp(-L\sqrt{n}).\end{eqnarray*}
 
For bounding the expectation for $b\geq v^*$, we cannot simply use $x^2\leq L$ and $r_n(x)\leq L/\sqrt{n}$. However, note that 
\[- \frac{1}{6}\,(x-v^*)^2 + \frac{x^2}{4\,(n-1)}\leq \left\{\begin{array}{ll}- \frac{1}{5}\,(x-v^*)^2 + \frac{L}{\sqrt{n}} &\mbox{for $x\leq (n-1)^{1/4}$};\\
   - \frac{1}{6}\,(x-v^*)^2 + \frac{2(x-v^*)^2 + 2(v^*)^2}{(n-1)}\leq -\frac{1}{5}\,(x-v^*)^2 + \frac{L}{n} &\mbox{ for larger $x$}.\end{array}\right.\]

Also, recalling the expression for $r_n$ in Lemma \ref{lem:LDP}, 
\[0\leq r_n(x)\leq \kappa\left(\frac{x-\sqrt{2/\pi}}{\sqrt{n}} + \frac{1}{n} \right)\leq \frac{L}{\sqrt{n}} + \frac{L\,(x-v^*)}{\sqrt{n}}.\]  

This allows us to obtain, for $b\leq v^*+\epsilon_0$, 
   
\begin{eqnarray*}\frac{\EXP\left[ \exp\left(
    \frac{c\|N\|_1^2}{2n} \right)\IND{\|N\|_1\geq bn}\right]}{\exp(n\,\alpha^*)}&\leq& \left(L\sqrt{n} - \frac{(b-v^*)^2\,n}{4}\right);\\  \frac{ \EXP\left[ \exp\left(
    \frac{c\|N\|_1^2}{2n} \right)\IND{\|N\|_1\geq v^*n}\right]}{{\exp(n\,\alpha^*)}}&\geq & \exp(-L\sqrt{n}).\end{eqnarray*}

This leads to our main results. Indeed, if we apply the above bounds with $a=b=v^*$, we obtain that, as $n\to +\infty$
\begin{eqnarray*}\EXP\left[ \exp\left(
    \frac{c\|N\|_1^2}{2n}\right)\right] &=&  \EXP\left[ \exp\left(
    \frac{c\|N\|_1^2}{2n} \right)\IND{\|N\|_1\leq v^*n}\right] \\  & & +\EXP\left[ \exp\left(
    \frac{c\|N\|_1^2}{2n} \right)\IND{\|N\|_1\geq v^*n}\right] \\ & = & \exp\left(n\,\alpha^* \pm L\sqrt{n}\right).\end{eqnarray*}
    This implies the first statement in the Theorem via Proposition \ref{prop:alphabounds}.

Secondly, we apply Proposition \ref{prop:conditional} and obtain:
\begin{eqnarray*}\PROB\left\{ - H(\sigma)\leq - \frac{v^*}{2} - \epsilon \mid \sigma\mbox{ local optimum}\right\} &\leq &  \frac{
\EXP \left[ \IND{\|N\|_1\ge b\,n}  \exp\left(
    \frac{\|N\|_1^2}{4(n-1)} \right) \right]}
{\EXP \exp\left(
    \frac{\|N\|_1^2}{4(n-1)} \right)}~
 \\ & & \left(\mbox{ with }b = \left(v^* + 2\epsilon\right)\,\sqrt{\frac{n}{n-1}}\right)\\ &= & \exp\left(-L\sqrt{n} - \epsilon^2\,n\right),\end{eqnarray*}
and (for $\epsilon$  small enough, so that $a$ below is $\geq \sqrt{2/\pi}$):
\begin{eqnarray*}\PROB\left\{ - H(\sigma)\geq - \frac{v^*}{2} + \epsilon \mid \sigma\mbox{ local optimum}\right\} & = &  \frac{
\EXP \left[ \IND{\|N\|_1\le a\,n}  \exp\left(
    \frac{\|N\|_1^2}{4(n-1)} \right) \right]}
{\EXP \exp\left(
    \frac{\|N\|_1^2}{4(n-1)} \right)}~
 \\ & & \left(\mbox{ with }a = \left(v^* - 2\epsilon\right)\,\sqrt{\frac{n}{n-1}}\right)\\ &=& \exp\left(-L\sqrt{n} - \epsilon^2\,n\right).\end{eqnarray*}
   \end{proof}
 
\section{Auxiliary results}

\subsection{Lemmas on large deviations of $\|N\|_1$}\label{sec:technical}

The goal of this subsection is to prove a series of Lemmas that together imply Lemma \ref{lem:LDP}. We first find an expression for the Laplace transform of $|N(0,1)|$

\begin{lemma}
\label {lem:absnormal}
Let $N(0,1)$ be a standard normal random variable. For all $\lambda >0$,
\[
   \EXP e^{\lambda|N(0,1)|}= e^{\lambda^2/2 + \phi(\lambda)}~,
\]
where $\phi(\lambda)= \log(2\Phi(\lambda))$, with
$\Phi(\lambda)=\PROB\{N(0,1)\le \lambda\}$.
\end{lemma}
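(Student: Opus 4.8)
The plan is a direct computation of the moment generating function of $|N(0,1)|$, exploiting symmetry and a completion of the square. First I would write
\[
\EXP e^{\lambda|N(0,1)|} = \frac{1}{\sqrt{2\pi}}\int_{-\infty}^{\infty} e^{\lambda|x|}\,e^{-x^2/2}\,dx
= \frac{2}{\sqrt{2\pi}}\int_{0}^{\infty} e^{\lambda x}\,e^{-x^2/2}\,dx,
\]
where the factor $2$ comes from the fact that $e^{\lambda|x|}e^{-x^2/2}$ is an even function of $x$.

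Next I would complete the square in the exponent, using $\lambda x - \tfrac{x^2}{2} = -\tfrac12(x-\lambda)^2 + \tfrac{\lambda^2}{2}$, so that
\[
\EXP e^{\lambda|N(0,1)|} = 2\,e^{\lambda^2/2}\,\frac{1}{\sqrt{2\pi}}\int_{0}^{\infty} e^{-(x-\lambda)^2/2}\,dx.
\]
Substituting $y = x-\lambda$ turns the remaining integral into $\int_{-\lambda}^{\infty}\tfrac{1}{\sqrt{2\pi}}e^{-y^2/2}\,dy = \PROB\{N(0,1)\ge -\lambda\} = \PROB\{N(0,1)\le \lambda\} = \Phi(\lambda)$, the last equality by symmetry of the standard normal. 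Therefore $\EXP e^{\lambda|N(0,1)|} = 2\Phi(\lambda)\,e^{\lambda^2/2}$, and taking $\exp\log$ gives $2\Phi(\lambda)\,e^{\lambda^2/2} = e^{\lambda^2/2 + \log(2\Phi(\lambda))} = e^{\lambda^2/2+\phi(\lambda)}$, which is the claim. Note this identity in fact holds for all $\lambda\in\R$, not just $\lambda>0$, although only $\lambda>0$ is needed in the sequel.

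There is no real obstacle here: the only steps requiring any care are the completion of the square and recognizing the shifted Gaussian integral as the normal tail probability $\Phi(\lambda)$; everything else is bookkeeping. It is worth remarking for later use (e.g. in Lemma \ref{lem:LDP} and the definition of $\mu^*$) that $\phi(\lambda) = \log(2\Phi(\lambda))$ is smooth on $\R$, with $\phi(0) = 0$ and $\phi'(\lambda) = \varphi(\lambda)/\Phi(\lambda)$ where $\varphi$ is the standard normal density, so that the cumulant generating function $\lambda\mapsto \lambda^2/2 + \phi(\lambda)$ of $|N(0,1)|$ is smooth and strictly convex — facts that feed into the Cram\'er-type analysis downstream.
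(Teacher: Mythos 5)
Your proof is correct and is essentially identical to the paper's: both split off the factor $2$ by symmetry, complete the square to extract $e^{\lambda^2/2}$, and identify the remaining shifted Gaussian integral as $\PROB\{N(0,1)>-\lambda\}=\Phi(\lambda)$. The extra remarks about $\phi$ being smooth and the identity holding for all $\lambda\in\R$ are accurate and harmless additions.
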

\begin{proof}
\begin{eqnarray*}
\EXP e^{\lambda|N(0,1)|} & = & \frac{2}{\sqrt{2\pi}} \int_0^\infty
e^{\lambda x - x^2/2}dx  \\
& = & 2e^{\lambda^2/2} \frac{1}{\sqrt{2\pi}}\int_0^\infty
e^{(x-\lambda)^2/2}dx \\
& = & 2e^{\lambda^2/2} \PROB\{N(0,1) > - \lambda\}~.
\end{eqnarray*}
\end{proof}

We will need to compute the large deviations rate function for $\sum_{i=1}^n|N_i|$, with $N_i$ i.i.d. standard normal. As usual, this is given by the Fenchel-L\'{e}gendre transform of $\log \EXP e^{\lambda|N(0,1)|}$:
\[\mu^*(x):= \sup_{\lambda\geq 0}\,\lambda x - \log \EXP e^{\lambda|N(0,1)|}.\]

The next lemma collects technical facts on $\mu^*$ and the value $\lambda=\lambdastar$ that achieves the minimum.

\begin{lemma}\label{lem:lambdastar}For each $x\geq \sqrt{2/\pi}$, there exists a unique $\lambda=\lambdastar(x)\geq 0$ such that
\[\lambda + \phi'(\lambda) = x.\]
Defining:
\[x\geq \sqrt{\frac{2}{\pi}}\mapsto \mu^*(x) := \lambdastar(x)\,x -\frac{\lambdastar(x)^2}{2} - \phi(\lambdastar(x)),\]
we have that, for each $x$ in the above range, $\mu^*(x)$ is the global maximum of 
\[\lambda\geq 0\mapsto \lambda\,x - \frac{\lambda^2}{2} - \phi(\lambda),\]
which is uniquely achieved at $\lambda=\lambdastar(x)$. We also have the following inequalities.
 \begin{enumerate}
 \item {\em Strict concavity.} For each $\lambda\geq 0$, $x\geq \sqrt{2/\pi}$,
\begin{equation}\label{eq:growtharoundlambdastar}\mu^*(x) - (\lambda\,x - \ln \EXP\,e^{\lambda |N(0,1)|}) \in \left[\frac{(\lambda-\lambdastar(x))^2}{40},\frac{(\lambda-\lambdastar(x))^2}{2}\right].\end{equation}
\item {\em Derivative bounds for $\lambdastar$:} \begin{equation}\label{eq:lambdastarstarderivative}1\leq  \lambdastar'(x) \leq 20\end{equation}
\end{enumerate}\end{lemma}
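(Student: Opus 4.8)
The plan is to work entirely with the convex-analytic structure of the function $g_x(\lambda):=\lambda x-\tfrac{\lambda^2}{2}-\phi(\lambda)$, whose relevant derivatives I would compute from the explicit formula $\phi(\lambda)=\log(2\Phi(\lambda))$. Writing $h(\lambda):=\phi'(\lambda)=\frac{\varphi(\lambda)}{\Phi(\lambda)}$ (with $\varphi$ the standard normal density), one has $h(\lambda)=\EXP[|N(0,1)|-\lambda\mid |N(0,1)|\ge-\lambda]$ up to the usual identity, and in any case $g_x'(\lambda)=x-\lambda-h(\lambda)$. First I would establish that $\psi(\lambda):=\lambda+h(\lambda)$ is a strictly increasing bijection from $[0,\infty)$ onto $[\sqrt{2/\pi},\infty)$: at $\lambda=0$ we get $\psi(0)=h(0)=\varphi(0)/\Phi(0)=\sqrt{2/\pi}$, and $\psi'(\lambda)=1+h'(\lambda)$, so it suffices to show $h'(\lambda)>-1$, equivalently $h'>-1$. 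Since $h=\phi'$ is the derivative of the log of the moment generating function of $|N(0,1)|$ shifted appropriately, $\phi''(\lambda)=h'(\lambda)$ is a variance-type quantity; more directly, $h'(\lambda)=\frac{\varphi'\Phi-\varphi^2}{\Phi^2}=-\lambda h(\lambda)-h(\lambda)^2=-h(\lambda)(\lambda+h(\lambda))=-h(\lambda)\psi(\lambda)$. Because $\log\EXP e^{\lambda|N(0,1)|}=\tfrac{\lambda^2}{2}+\phi(\lambda)$ has second derivative $1+h'(\lambda)=\var(|N(0,1)|)$ under the tilted measure, it is $>0$; and one can quantify it from below and above to get the constants. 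This proves existence and uniqueness of $\lambdastar(x)$ with $\psi(\lambdastar(x))=x$, that $g_x$ is strictly concave with unique maximizer $\lambdastar(x)$, and that $\mu^*$ is well defined as stated.

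For the strict concavity estimate \eqref{eq:growtharoundlambdastar}, note $\mu^*(x)-g_x(\lambda)=g_x(\lambdastar(x))-g_x(\lambda)$, and by Taylor's theorem with $g_x'(\lambdastar(x))=0$ this equals $-\tfrac12 g_x''(\xi)(\lambda-\lambdastar(x))^2$ for some $\xi$ between $\lambda$ and $\lambdastar(x)$, where $g_x''(\xi)=-1-h'(\xi)=-\var_\xi(|N(0,1)|)$. So I need two-sided bounds $\tfrac{1}{20}\le 1+h'(\xi)\le 1$ valid for all $\xi\ge0$ (the upper bound $1+h'\le1$ is immediate since $h'\le0$, giving the factor $\tfrac12$; the lower bound $1+h'(\xi)\ge\tfrac1{20}$ needs a uniform lower bound on the tilted variance, which I would get by the substitution and an explicit estimate — a crude bound like $\var_\xi(|N|)\ge\var_\xi(\min(|N|,K))$ for a fixed truncation $K$, or directly from $1+h'(\xi)=1-h(\xi)\psi(\xi)$ together with the known asymptotics $h(\xi)\sim1/\xi$, $\psi(\xi)\sim\xi$ as $\xi\to\infty$ and the fact that the quantity is continuous and positive on $[0,\infty)$, hence bounded below by a positive constant, which one checks is at least $1/20$). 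This yields \eqref{eq:growtharoundlambdastar}.

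For the derivative bounds \eqref{eq:lambdastarstarderivative}, I differentiate the defining identity $\psi(\lambdastar(x))=x$ implicitly: $\lambdastar'(x)=1/\psi'(\lambdastar(x))=1/(1+h'(\lambdastar(x)))$. The bounds then follow immediately from exactly the same two-sided estimate $\tfrac1{20}\le 1+h'(\xi)\le 1$ used above, evaluated at $\xi=\lambdastar(x)$: the upper bound $1+h'\le1$ gives $\lambdastar'\ge1$, and the lower bound $1+h'\ge\tfrac1{20}$ gives $\lambdastar'\le20$. Finally smoothness of $\mu^*$ follows since $\mu^*(x)=g_x(\lambdastar(x))$ with $\lambdastar$ smooth (by the implicit function theorem, as $\psi$ is smooth with nonvanishing derivative) and $g$ smooth in both variables; and $\mu^*(\sqrt{2/\pi})=g_{\sqrt{2/\pi}}(0)=-\phi(0)=-\log(2\Phi(0))=-\log1=0$.

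The main obstacle is the \emph{quantitative} lower bound $1+h'(\xi)\ge1/20$ uniformly in $\xi\ge0$; everything else is soft convex analysis plus the implicit function theorem. I would handle it by reducing to a uniform lower bound on the variance of $|N(0,1)|$ under the exponential tilts $d\nu_\xi\propto e^{\xi|y|}\,d\mu(y)$, then estimating that variance by restricting to a fixed window (say $|y|\in[0,1]$) where the tilt distorts the Gaussian by a bounded factor, so the restricted second moment is bounded below by a universal constant; a short explicit computation then confirms the numerical constant $1/20$ works for all $\xi$, with the large-$\xi$ regime covered separately by the asymptotic $1+h'(\xi)=1-h(\xi)\psi(\xi)\to$ a finite positive limit.
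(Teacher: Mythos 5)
Your plan follows the paper's route almost exactly: pass to $\Lambda(\lambda)=\log\EXP e^{\lambda|N(0,1)|}=\lambda^2/2+\phi(\lambda)$, observe $\Lambda'(0)=\sqrt{2/\pi}$, get existence/uniqueness of $\lambdastar$ from strict monotonicity of $\Lambda'$, deduce both the Taylor estimate \eqref{eq:growtharoundlambdastar} and the derivative bound \eqref{eq:lambdastarstarderivative} from two-sided bounds on $\Lambda''(\lambda)=1+\phi''(\lambda)$, and use the implicit function theorem for smoothness. You have also correctly isolated the only non-soft step, namely the uniform lower bound $1+\phi''(\lambda)\ge 1/20$; the paper proves precisely this (in the equivalent form $\phi''(\lambda)>-0.95$) as a separate appendix lemma (Lemma~\ref{lem:phidoubleprime}), via the identity $-\phi''(\lambda)=\frac{f(\lambda)}{\Phi(\lambda)}\bigl(\lambda+\frac{f(\lambda)}{\Phi(\lambda)}\bigr)$ that you also derive, followed by a three-interval numerical case analysis. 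Two cautions about the deferred step as you sketched it. First, the inequality $\var_\xi(|N|)\ge\var_\xi(\min(|N|,K))$ is \emph{not} valid in general (truncation can increase variance), so that particular ``crude bound'' should be dropped; restricting to a window and bounding the restricted second moment from below is also not enough, since a lower bound on $\EXP_\xi[(|N|-m)^2\,\IND{|N|\le K}]$ does not bound $\var_\xi(|N|)$ without control of where the mean sits, and that window has vanishing mass as $\xi\to\infty$. Second, your alternative --- work directly with $1+\phi''(\lambda)=1-\frac{f(\lambda)}{\Phi(\lambda)}\bigl(\lambda+\frac{f(\lambda)}{\Phi(\lambda)}\bigr)$, using $f/\Phi\le\sqrt{2/\pi}$ near $0$ and $f/\Phi\le 2f$ together with $\lambda e^{-\lambda^2/2}\le 1/\sqrt{e}$ for large $\lambda$, plus a short numerical check on the middle interval --- is exactly what the paper carries out, and is the cleaner path; you should execute that rather than the variance-truncation idea.
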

\begin{proof}By the previous Lemma,
\[\lambda + \phi'(\lambda) = \frac{d}{d\lambda}\,\log\,\EXP e^{\lambda|N(0,1)|},\]
which is a smooth function because $|N(0,1)|$ has a Gaussian-type tail. Using this ``lightness of the tail",~ one can differentiate under the the expectation and obtain:
\[ \phi'(0) = \frac{d}{d\lambda}\EXP \,e^{\lambda|N(0,1)|}\mid_{\lambda=0} = \EXP\,|N(0,1)| = \sqrt{\frac{2}{\pi}}.\]
Lemma \ref{lem:phidoubleprime}~below implies that 
\[-0.95\leq \phi''(\lambda) \leq 0.\]
Therefore
\begin{equation}\label{eq:convexMGF}\forall \lambda\geq 0,\, \frac{d}{d\lambda}\,(\lambda + \phi'(\lambda))\in [0.05,1].\end{equation}
In particular, $\lambda + \phi'(\lambda)$ is an increasing function that is equal to $\sqrt{2/\pi}$ at $\lambda=0$ and diverges when $\lambda\nearrow +\infty$. It follows that for all $x\geq \sqrt{2/\pi}$ there exists a unique $\lambda=\lambdastar(x)$ with $\lambdastar(x)+\phi'(\lambdastar(x))=x$, and $\lambdastar(\sqrt{2/\pi})=0$. The Implicit Function Theorem guarantees that $\lambdastar$ is smooth over $[\sqrt{2/\pi},+\infty)$ and 
\begin{equation}\label{eq:lambdastarderivativehere}\lambdastar'(x) = \frac{1}{\frac{d}{d\lambda}\,(\lambda + \phi'(\lambda))\mid_{\lambda=\lambdastar(x)}}\in [1,20].\end{equation}
Equation (\ref{eq:convexMGF}) above shows that \[\lambda\,x - \frac{\lambda^2}{2} - \phi(\lambda) = \lambda\,x - \ln \EXP\,e^{\lambda |N|} \] is a strictly concave function of $\lambda$ with second derivative 
\[-1\leq -\frac{d^2}{(d\lambda)^2}\,\left(\frac{\lambda^2}{2} + \phi(\lambda) \right)\leq- \frac{1}{20}.\]
Thus $\lambdastar(x)$, which is a critical point for this function, is its unique global maximum of $\lambda\,x - \ln \EXP\,e^{\lambda |N|}$. The value of the function at that point is precisely $\mu^*(x)$. 

Let us now prove the estimates in the Lemma. The strict concavity property in (\ref{eq:growtharoundlambdastar}) follows from expanding 
\[\lambda\,x - \ln \EXP\,e^{\lambda |N(0,1)|}\]
around the critical point $\lambda=\lambdastar(x)$ and applying a second-order Taylor expansion:
\begin{eqnarray*}\lambda\,x - \ln \EXP\,e^{\lambda |N(0,1)|} - \mu^*(x)  &=&  \frac{d}{d{\lambda}}\,(\lambda\,x - \ln \EXP\,e^{\lambda |N(0,1)|})_{\lambda=\lambdastar(x)} \\ 
& & + \frac{1}{2}\,\frac{d^2}{(d\lambda)^2}\,\left(\lambda\,x - \ln \EXP\,e^{\lambda |N(0,1)|} \right)_{\lambda=\tilde{\lambda}}\,(\lambda - \lambdastar(x))^2 \\ & & \mbox{ with }\tilde{\lambda}= (1-\alpha)\lambdastar(x) + \alpha \lambda,\, 0\leq \alpha\leq 1.\end{eqnarray*}
noting that the first derivative is $0$ and the second one is between $-1$ and $-1/20$. Finally, the derivative bound in item $2$ is proven in (\ref{eq:lambdastarderivativehere}).
\end{proof}

\begin{lemma}
\label{lem:chernoff}
Let $N=(N_1,\ldots,N_n)$ be a vector of $n$ independent standard
normal random variables. Let $x\ge \sqrt{2/\pi}$. 
Define $\mu^*(x)$ as in Lemma \ref{lem:lambdastar}. Then for all $n\ge 1$,
\[
    \frac{1}{n} \log \PROB\left\{ \|N\|_1 \ge nx \right\} = 
    -\mu^*(x) - r_n(x)~,
\]
where
\[0\leq r_n(x)\leq   \kappa\,\left(\frac{x - \sqrt{2/\pi}}{\sqrt{n}} + \frac{1}{n}\right) \]
for some universal $\kappa>0$ that is independent of $x\geq \sqrt{2/\pi}$ and $n\geq 1$.\end{lemma}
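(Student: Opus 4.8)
The identity to prove is a two‑sided quantitative Cram\'er bound for $\|N\|_1$. I would prove the upper bound (which forces $r_n(x)\ge 0$) by an optimized Chernoff inequality, and the lower bound (which must match up to the error $r_n(x)$) by exponential tilting to the measure whose coordinate mean is exactly $x$, restricting the event to a window of width $t\sqrt n$ above $nx$. The whole game is to show that the tilted probability of that window is bounded below by a \emph{universal} constant, uniformly in $x\ge\sqrt{2/\pi}$ and in $n\ge 1$; the exponential prefactors then produce exactly the advertised $O\big((x-\sqrt{2/\pi})/\sqrt n + 1/n\big)$ correction.

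\textbf{Upper bound.} Write $\Lambda(\lambda):=\log\EXP e^{\lambda|N(0,1)|}=\lambda^2/2+\phi(\lambda)$ by Lemma~\ref{lem:absnormal}. For every $\lambda\ge 0$, Markov's inequality applied to $e^{\lambda\|N\|_1}$ together with independence of the $N_i$ gives $\PROB\{\|N\|_1\ge nx\}\le e^{-n(\lambda x-\Lambda(\lambda))}$. Taking $\lambda=\lambdastar(x)$, the maximizer supplied by Lemma~\ref{lem:lambdastar}, yields $\PROB\{\|N\|_1\ge nx\}\le e^{-n\mu^*(x)}$, i.e. $r_n(x)\ge 0$.

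\textbf{Lower bound by tilting.} Fix $x\ge\sqrt{2/\pi}$, set $\lambda=\lambdastar(x)$, and let $\tilde P$ be the law under which the $N_i$ are i.i.d. with density proportional to $e^{\lambda|y|}$ against the standard Gaussian, so $\tfrac{dP}{d\tilde P}=\exp(-\lambda\|N\|_1+n\Lambda(\lambda))$. By construction $\EXP_{\tilde P}|N_i|=\Lambda'(\lambda)=\lambda+\phi'(\lambda)=x$ and $\mathrm{Var}_{\tilde P}(|N_i|)=\Lambda''(\lambda)=1+\phi''(\lambda)\in[0.05,1]$, the bounds on $\phi''$ coming from Lemma~\ref{lem:phidoubleprime} and being \emph{uniform in $x$}. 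Discarding everything outside the window $\{nx\le\|N\|_1\le nx+t\sqrt n\}$ for a universal $t$ to be fixed, and using $\lambda\ge 0$ on that event,
\[
\PROB\{\|N\|_1\ge nx\}\ \ge\ e^{\,n(\Lambda(\lambda)-\lambda x)}\,e^{-\lambda t\sqrt n}\,\tilde P\{nx\le\|N\|_1\le nx+t\sqrt n\},
\]
and $\Lambda(\lambda)-\lambda x=-\mu^*(x)$. It remains to bound the tilted window probability below by a universal $c_0>0$. Since $\|N\|_1$ has $\tilde P$‑mean exactly $nx$ and $\tilde P$‑variance in $[0.05\,n,\,n]$, Chebyshev gives $\tilde P\{\|N\|_1>nx+t\sqrt n\}\le 1/(0.05\,t^2)$. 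For the other half, a one‑sided second‑moment argument works for \emph{every} $n\ge 1$: writing $S:=\|N\|_1-nx$ (a sum of i.i.d., mean zero, variance $\ge 0.05$ per term, with a uniformly bounded fourth centered moment because each $|N_i|$ is under $\tilde P$ a half‑line‑truncated Gaussian with uniformly controlled fluctuations), Cauchy--Schwarz gives $\EXP_{\tilde P}|S|\ge (\EXP_{\tilde P}S^2)^2/(\EXP_{\tilde P}S^4)^{3/4}\ge c\sqrt n$, and since $\EXP_{\tilde P}S=0$ we get $\EXP_{\tilde P}[S_+]=\tfrac12\EXP_{\tilde P}|S|\ge \tfrac{c}{2}\sqrt n$. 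Combining $\EXP_{\tilde P}[S_+]=\int_0^\infty \tilde P\{S\ge s\}\,ds$ with the Chebyshev tail $\tilde P\{S\ge s\}\le n/s^2$ and monotonicity of $s\mapsto\tilde P\{S\ge s\}$ forces $\tilde P\{S\ge 0\}\ge c_1>0$; taking $t$ a large enough universal constant then gives $\tilde P\{nx\le\|N\|_1\le nx+t\sqrt n\}\ge c_1-1/(0.05\,t^2)\ge c_0>0$.

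\textbf{Collecting the error and the obstacle.} The displayed lower bound now gives $\tfrac1n\log\PROB\{\|N\|_1\ge nx\}\ge-\mu^*(x)-\lambdastar(x)\,t/\sqrt n+\tfrac1n\log c_0$, i.e. $r_n(x)\le \lambdastar(x)\,t/\sqrt n+\log(1/c_0)/n$. Since $\lambdastar(\sqrt{2/\pi})=0$ and $\lambdastar'(x)\le 20$ by \eqref{eq:lambdastarstarderivative}, we have $\lambdastar(x)\le 20\,(x-\sqrt{2/\pi})$, so $r_n(x)\le \kappa\big((x-\sqrt{2/\pi})/\sqrt n+1/n\big)$ with $\kappa:=\max\{20t,\ \log(1/c_0)\}$, independent of $x$ and $n$; together with the Chernoff upper bound this is the Lemma. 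The single nontrivial point — and the place I expect to spend the most care — is the uniform lower bound $\tilde P\{nx\le\|N\|_1\le nx+t\sqrt n\}\ge c_0$: uniformity in $x$ rests entirely on the two‑sided control $\phi''\in[-0.95,0]$ (hence a tilted variance and higher centered moments bounded above and below), and uniformity down to $n=1$ is why one must use the second‑moment estimate for $\EXP_{\tilde P}[S_+]$ rather than a central limit theorem, which would only be available for large $n$.
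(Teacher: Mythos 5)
Your proof is essentially correct and takes a genuinely different (though related) route on the lower bound. The upper bound via optimized Chernoff is identical to the paper's. For the lower bound, the paper invokes Alon--Spencer's nonasymptotic Cram\'er lower bound (\cite[Theorem A.2.1]{AlSp16}, quoted as Lemma~\ref{lem:alsp}), which is itself a tilting argument packaged in terms of $g_a(c)=e^{-ac}\EXP e^{cX}$; the only inputs needed are the ratios $g_a(\lambdastar\pm\epsilon)/g_a(\lambdastar)\le e^{n\epsilon^2/2}$, which follow purely from the quadratic two-sided bound (\ref{eq:growtharoundlambdastar}) on $\lambda\mapsto \lambda x-\log\EXP e^{\lambda|N|}$ near $\lambdastar$, followed by the choices $\epsilon=\sqrt{2/n}$, $u=\sqrt{2n}$, $a=nx+\sqrt{2n}$ which produce the universal factor $1-2/e$. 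You instead tilt explicitly and reduce everything to a universal lower bound $\tilde P\{nx\le\|N\|_1\le nx+t\sqrt n\}\ge c_0$, obtained from a Chebyshev tail estimate together with a Paley--Zygmund/fourth-moment lower bound on $\tilde P\{S\ge 0\}$. Both routes deliver the same $r_n(x)\le\kappa\big((x-\sqrt{2/\pi})/\sqrt n+1/n\big)$ via $\lambdastar(x)\le 20(x-\sqrt{2/\pi})$. What your route buys is self-containment (no black-box citation); what it costs is that you must control the \emph{fourth} centered moment of $|N_1|$ under $\tilde P_\lambda$ uniformly in $\lambda$, whereas the paper's route uses only the second-derivative bound $\Lambda''\in[1/20,1]$ already supplied by Lemma~\ref{lem:phidoubleprime}.

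That fourth-moment uniformity is the one thing you assert without proof (``uniformly controlled fluctuations''). It is indeed true: under $\tilde P_\lambda$ the variable $|N_1|-\lambda$ has the law of $N(0,1)$ conditioned on $\{N(0,1)\ge-\lambda\}$, so its centered fourth moment is a continuous function of $\lambda\ge 0$ that interpolates between the half-normal value at $\lambda=0$ and the unconditional value $3$ as $\lambda\to\infty$, hence is uniformly bounded above (and, together with the variance lower bound $\Lambda''\ge 1/20$, gives $\EXP_{\tilde P}S^4\le Cn^2$ and $\EXP_{\tilde P}S^2\ge n/20$, hence $\EXP_{\tilde P}|S|\ge c\sqrt n$). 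But since this single fact is the hinge on which $\tilde P\{S\ge 0\}\ge c_1$ and hence your entire lower bound turns, it needs an explicit line or two of verification rather than an appeal to ``uniform control.'' With that supplied, the argument is complete and yields the Lemma with a universal $\kappa$ and down to $n=1$, as required.
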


\begin{proof}
For any $\lambda>0$, the usual Cram\'{e}r-Chernoff trick may be combined with Lemma \ref{lem:lambdastar}~to obtain:
\[\frac{1}{n}\log\PROB\left\{ \|N\|_1 \ge nx \right\} \leq \inf_{\lambda\geq 0}(\log\,\EXP\,e^{\lambda|N|} - \lambda\,x) = - \mu^*(x).\]
It remains to give a lower bound for this probability. In order to get a non-asymptotic bound, 
we use the following lemma that appears in the fourth edition of Alon and
Spencer's book \cite[Theorem A.2.1]{AlSp16}.

\begin{lemma}
\label{lem:alsp}
Let $u,\lambda, \epsilon >0$ such that $\lambda >\epsilon$. Let $X$ be
a random variable such that the moment generating function $\EXP
e^{cX}$ exists for $c\le \lambda +\epsilon$. For any $a\in \R$, define
$g_a(c)= e^{-ac} \EXP e^{cX}$. Then 
\[
   \PROB\{ X \ge a-u\} \ge e^{-\lambda u}\left( g_a(\lambda) -
     e^{-\epsilon u} \left( g_a(\lambda+\epsilon) +
       g_a(\lambda-\epsilon) \right) \right)~. 
\]
\end{lemma}

Lemma \ref{lem:alsp} will be applied to the random variable $X=\|N\|_1$ with $\lambda=\lambdastar(a/n)$ and $a,u,\eps$ to be chosen. 

In the notation of that Lemma, for each $\lambda\geq 0$,
\[
   g_a(\lambda) = \exp\left(-n \mu_a(\lambda)\right)\mbox{ where }\mu_a(\lambda)=\left(\lambda\,(a/n) - \ln\,\EXP\,e^{\lambda\,|N|} \right).
\]
Using Lemma \ref{lem:lambdastar} to bound this expression, we obtain from (\ref{eq:growtharoundlambdastar}) that
\[
   \frac{g_a(\lambdastar(a/n)+ \epsilon)}{g_a(\lambdastar(a/n))}
    \le e^{n\epsilon^2/2}
\quad \text{and} \quad
   \frac{g_a(\lambdastar(a/n)- \epsilon)}{g_a(\lambdastar(a/n))}
    \le e^{n\epsilon^2/2}~.
\]
Moreover, $g_a(\lambdastar(a/n)) = e^{-n\mu^*(a/n)}$. So

\[
   \PROB\{ \|N\|_1 \ge a-u\} \ge e^{-\lambdastar(a/n)\,u}e^{-n\mu^*(a/n)}\,\left( 1 - 2
     e^{-\epsilon u + \frac{\eps^2n}{2}}  \right)~. 
\]
We now choose $\epsilon=\sqrt{2/n}$ and $u=\epsilon n/2+ 1/\epsilon=\sqrt{2n}$ to obtain:
\[ \PROB\{ \|N\|_1 \ge a-u\} \ge e^{-\lambdastar(a/n)\,\sqrt{2n}}e^{-n\mu^*(a/n)}\,\left(1 - \frac{2}{e}\right)~. \]
Letting $a=nx+\sqrt{2n} = n\,(x+\epsilon)$, we have that
\[\PROB\{\|N\|_1 \ge nx\} = e^{-\lambdastar(x+\epsilon)\,\sqrt{2n}}e^{-n\,\mu^*(x+\epsilon)}\,\left(1 - \frac{2}{e}\right).\]
Recall from Lemma \ref{lem:lambdastar} that $\lambdastar'(y)\leq 20\,(y-\sqrt{2/\pi})$ and $1\,(y-\sqrt{2/\pi})\leq (\mu^*)'(y)\leq 20\,(y-\sqrt{2/\pi})$ for all $y\geq \sqrt{2/\pi}$. Thus:
\[\lambdastar(x+\epsilon)\,\sqrt{2n}\leq 20\,(x + \epsilon -\sqrt{2/\pi})\,\sqrt{2n}\]
and
\[\mu^*(x+\epsilon)\leq \mu^*(x) + 20\,\epsilon\,(x+\epsilon-\sqrt{2/\pi}).\]
Recalling $\epsilon = \sqrt{2/n}$, we may plug theses estimates back in the lower bound for our probability and obtain the theorem. \end{proof}

\subsection{Estimates on the optimization problem}\label{sub:estimatesforproof}

In this section we prove Lemma \ref{lem:estimatesforproof}.

\begin{proof}[of Lemma \ref{lem:estimatesforproof}] We will use Lemma \ref{lem:lambdastar} several times in the proof. In particular, the properties of $\mu^*$ and $\lambdastar=(\mu^*)'$ in that proof will be used several times. 

We first argue that $x\mapsto R(x)$ is a strictly concave function of $x\geq \sqrt{2/\pi}$. To see this, we use Lemma \ref{lem:lambdastar} to obtain:
\[R''(x) = \frac{1}{2} - \lambdastar'(x).\]
By the same Lemma, we know $1\leq \lambdastar'(x)\leq 20$. So
\[ \forall x\geq \sqrt{\frac{2}{\pi}}\,:\, -20\leq R''_c(x) \leq -\frac{1}{2}.\]

We now argue that $R(x)$ is maximized at some  $x=v^*>\sqrt{2/\pi}$. To see this, notice that $\lambdastar(\sqrt{2/\pi})=0$, so the derivative of $R$ at $x=\sqrt{2/\pi}$ satisfies:
\[R'(x) = \frac{1}{2}\sqrt{2/\pi} - \lambdastar(\sqrt{2/\pi}) >0\]
We conclude that $R$ is increasing in an interval to the right of $\sqrt{2/\pi}$. At the same time, the second derivative of $R$ in $x$ is at most $-1$, so there exists a $s_+$ such that $R'(x)/\leq 0$ for $x\geq s_+$. So the maximum of $R$ in $x$ must be achieved at a point $v^*\in (\sqrt{2/\pi},s_+]$. In particular, $v^*$ is a critical point of $R$: $R'(v^*)=0$.

Now consider, \[\alpha^* := R(v^*) = \min_{x\geq \sqrt{2/\pi}}R(x).\]
By Taylor expansion, if $x\geq \sqrt{2/\pi}$,
\[R(x) = \alpha^* + R'(v^*)\,(x-v^*) + \frac{R''(v^* + \alpha\,(x-v^*))}{2}\,(x-v^*)^2\]
where $0\leq \alpha\leq 1$. The Theorem follows because $R'(v^*)=0$ and $R''\in [-20,-1/2]$.\end{proof}

\appendix \section{One more technical estimate}

\begin{lemma}
\label{lem:phidoubleprime}
Let $f(\lambda)=(2\pi)^{-1/2} e^{-\lambda^2/2}$ be the standard normal
density let $\Phi(\lambda) = \int_{-\infty}^\lambda f(x)dx$ be the
corresponding cumulative distribution function. Then for all $\lambda
\ge 0$,
\[
\frac{f'(\lambda)}{\Phi(\lambda)} -
\frac{f(\lambda)^2}{\Phi(\lambda)^2} > -0.95~.
\]
\end{lemma}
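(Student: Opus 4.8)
We must show that for all $\lambda \ge 0$,
\[
\frac{f'(\lambda)}{\Phi(\lambda)} - \frac{f(\lambda)^2}{\Phi(\lambda)^2} > -0.95,
\]
where $f$ is the standard normal density and $\Phi$ its CDF. Note $f'(\lambda) = -\lambda f(\lambda)$, so the left-hand side equals $-\lambda f(\lambda)/\Phi(\lambda) - (f(\lambda)/\Phi(\lambda))^2$. Writing $g(\lambda) = f(\lambda)/\Phi(\lambda)$ (a kind of "reverse hazard rate"), the claim is $-\lambda g(\lambda) - g(\lambda)^2 > -0.95$, i.e. $g(\lambda)^2 + \lambda g(\lambda) < 0.95$ for all $\lambda \ge 0$. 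Observe $g(\lambda) = \phi''(\lambda) \cdot (\text{something})$... actually more directly this quantity is exactly $\phi''(\lambda)$ in the notation of the paper (since $\phi(\lambda) = \log(2\Phi(\lambda))$, $\phi'(\lambda) = f(\lambda)/\Phi(\lambda) = g(\lambda)$, $\phi''(\lambda) = g'(\lambda) = -\lambda g - g^2$), consistent with the claimed bound $\phi'' \ge -0.95$.

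**Strategy.** The plan is a two-regime argument on $\lambda$, analyzing $h(\lambda) := g(\lambda)^2 + \lambda g(\lambda)$ and showing $h(\lambda) < 0.95$. First I would record elementary facts about $g$: it is positive, and $g(0) = f(0)/\Phi(0) = \sqrt{2/\pi}$, so $h(0) = 2/\pi \approx 0.6366 < 0.95$. As $\lambda \to +\infty$, the standard Mills-ratio asymptotic gives $g(\lambda) = f(\lambda)/\Phi(\lambda) \to 0$ exponentially fast, and $\lambda g(\lambda) \to 0$ as well, so $h(\lambda) \to 0$. Thus the supremum of $h$ on $[0,\infty)$ is attained at an interior critical point (or the bound holds trivially); I only need to control $h$ on a bounded interval, say $[0, M]$ for a modest explicit $M$, plus a crude tail estimate beyond $M$.

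**Execution.** For the bounded interval, I would use monotonicity/convexity of $g$. One clean route: it is classical that $g = f/\Phi$ is strictly decreasing and convex on $[0,\infty)$ (equivalently, $-\phi''$ relates to log-concavity of $\Phi$, which holds); hence $g(\lambda) \le g(0) = \sqrt{2/\pi}$ for $\lambda \ge 0$. That alone does not suffice because of the $\lambda g(\lambda)$ term, so I would instead bound $\lambda g(\lambda)$ directly. Using the standard lower bound $\Phi(\lambda) \ge \frac12 + \frac{1}{\sqrt{2\pi}}\lambda e^{-\lambda^2/2} \cdot(\text{const})$ is awkward; cleaner is $\Phi(\lambda) \ge \tfrac12$ for $\lambda \ge 0$, giving $g(\lambda) \le 2 f(\lambda) = \sqrt{2/\pi}\, e^{-\lambda^2/2}$, hence $\lambda g(\lambda) \le \sqrt{2/\pi}\,\lambda e^{-\lambda^2/2}$ and $g(\lambda)^2 \le (2/\pi) e^{-\lambda^2}$. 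Therefore
\[
h(\lambda) \le \frac{2}{\pi} e^{-\lambda^2} + \sqrt{\frac{2}{\pi}}\,\lambda e^{-\lambda^2/2}.
\]
The function $\lambda e^{-\lambda^2/2}$ is maximized at $\lambda = 1$ with value $e^{-1/2} \approx 0.6065$, so the second term is at most $\sqrt{2/\pi}\cdot e^{-1/2} \approx 0.7979 \cdot 0.6065 \approx 0.4839$; but this crude split gives $h \le 2/\pi + 0.484 \approx 1.12$, which is not yet below $0.95$. So the rough bound $\Phi \ge 1/2$ is too lossy near $\lambda = 1$ and I must be more careful there.

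**The main obstacle and its resolution.** The pinch point is the moderate range $\lambda \in [0.5, 2]$, where neither $\Phi \approx 1/2$ nor the Mills-ratio tail asymptotic is sharp. The hard part is getting a good enough bound there. I would handle it by (i) using a sharper lower bound for $\Phi$ on this range — e.g. $\Phi(\lambda) \ge \Phi(0) + \lambda f(\lambda) = \tfrac12 + \lambda f(\lambda)$ is false in general, but $\Phi(\lambda) \ge \tfrac12 + \int_0^\lambda f(t)\,dt \ge \tfrac12 + \lambda f(\lambda)$ IS valid since $f$ is decreasing on $[0,\infty)$ — wait, that inequality goes the wrong way; rather $\int_0^\lambda f(t)dt \ge \lambda f(\lambda)$ because $f$ decreasing, so indeed $\Phi(\lambda) \ge \tfrac12 + \lambda f(\lambda)$, which is a genuine improvement. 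With $g(\lambda) \le f(\lambda)/(\tfrac12 + \lambda f(\lambda))$, one gets $\lambda g(\lambda) \le \lambda f(\lambda)/(\tfrac12 + \lambda f(\lambda)) = 1 - \tfrac{1/2}{1/2 + \lambda f(\lambda)} < 1$, and more quantitatively, combined with $g(\lambda) \le \sqrt{2/\pi}$, $h(\lambda) = g(\lambda)(g(\lambda) + \lambda) \le \sqrt{2/\pi}\cdot g(\lambda) + \lambda g(\lambda)$; the first term is small when $\lambda$ is moderate (since $g(\lambda) = f(\lambda)/\Phi(\lambda) \le 2f(\lambda)$ decays), and the second is controlled by the $1 - \text{(positive)}$ bound. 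Alternatively — and this is what I would actually recommend for a clean writeup — reduce to a finite check: show $h$ is smooth, bound $h'$ crudely to get a Lipschitz constant $C$ on $[0, 3]$, evaluate $h$ at a finite grid of points with spacing $\delta < (0.95 - \max_{\text{grid}} h)/C$, and verify numerically that $h < 0.95 - C\delta$ at every grid point; then invoke the tail bound above (which already gives $h(\lambda) < 0.1$, say, for $\lambda \ge 3$) to finish. This grid-plus-Lipschitz argument is routine but tedious; it is the step I expect to consume the most effort, and it is essentially unavoidable because the constant $0.95$ is not round and the extremum of $h$ has no closed form.

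\medskip

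\noindent\textbf{Summary of the plan.}
\begin{enumerate}
\item Rewrite the claim as $h(\lambda) := g(\lambda)^2 + \lambda g(\lambda) < 0.95$ where $g = f/\Phi$; note $h(0) = 2/\pi$ and $h(\lambda) \to 0$.
\item Tail regime $\lambda \ge M$ (some explicit $M \le 3$): use $\Phi(\lambda) \ge 1/2$ to get $h(\lambda) \le \tfrac{2}{\pi}e^{-\lambda^2} + \sqrt{\tfrac{2}{\pi}}\lambda e^{-\lambda^2/2}$, which is $< 0.95$ for $\lambda \ge M$.
\item Core regime $\lambda \in [0, M]$: establish a Lipschitz bound for $h$ via an explicit bound on $|h'|$, then verify $h < 0.95$ on a sufficiently fine finite grid (using, where helpful, the sharper estimate $\Phi(\lambda) \ge \tfrac12 + \lambda f(\lambda)$ to keep the grid coarse).
\item Combine to conclude $h(\lambda) < 0.95$, equivalently $f'(\lambda)/\Phi(\lambda) - f(\lambda)^2/\Phi(\lambda)^2 > -0.95$, for all $\lambda \ge 0$.
\end{enumerate}
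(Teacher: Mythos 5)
Your reduction to showing $h(\lambda):=g(\lambda)^2+\lambda g(\lambda)<0.95$ with $g=f/\Phi$, and the idea of using $\Phi\ge\tfrac12$ to get $g\le 2f$, is exactly the paper's starting point (the bound $4f(\lambda)^2+2\lambda f(\lambda)$ you write as $u(\lambda)=\tfrac{2}{\pi}e^{-\lambda^2}+\sqrt{2/\pi}\,\lambda e^{-\lambda^2/2}$ is precisely what the paper uses on two of its three ranges). The misstep is in how you assess that bound: you maximized the two summands of $u$ separately, at $\lambda=0$ and $\lambda=1$ respectively, and got $\approx 1.12>0.95$, concluding that $\Phi\ge\tfrac12$ is ``too lossy near $\lambda=1$.'' But those two maxima do not occur at the same $\lambda$: the first term is only $2/\pi$ at $\lambda=0$ where the second vanishes, and the second is only $\approx 0.48$ at $\lambda=1$ where the first has dropped to $\approx 0.23$. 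In fact $u(\lambda)$ never exceeds about $0.85$. A two-line calculus argument closes it without any grid: bound $u(\lambda)\le e^{-\lambda^2/2}\left(\tfrac{2}{\pi}+\sqrt{2/\pi}\,\lambda\right)$, whose derivative vanishes exactly when $\lambda^2+\sqrt{2/\pi}\,\lambda-1=0$, i.e.\ at $\lambda_*\approx 0.678$, where the value is $\approx 0.936<0.95$. So the ``hard part'' you identify, and the grid-plus-Lipschitz machinery you propose to handle it, is an artifact of an over-pessimistic estimate of your own bound, not a genuine obstacle; your claim that a finite numerical check is ``essentially unavoidable'' is incorrect.

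As for the comparison: the paper also avoids any grid check, via a three-range split at explicit thresholds $\lambda_1\approx 0.393$ and $\lambda_2\approx 0.558$. On $[0,\lambda_1)$ it does not use $\Phi\ge\tfrac12$ at all but rather the monotone-decrease of $f/\Phi$, giving $g\le g(0)=\sqrt{2/\pi}$ and hence $h\le\tfrac{2}{\pi}+\lambda\sqrt{2/\pi}$; on $(\lambda_2,\infty)$ it uses your $u$ together with $\lambda e^{-\lambda^2/2}\le 1/\sqrt{e}$; and on $[\lambda_1,\lambda_2]$ it exploits that $\lambda e^{-\lambda^2/2}$ is increasing and $e^{-\lambda^2/2}$ is decreasing there to evaluate $u$ at the endpoints. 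Finally, note that your submission is a strategy outline rather than a proof: the grid verification you designate as the bulk of the work is left unexecuted.
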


\begin{proof}
Note that $f'(\lambda)= -\lambda f(\lambda)$, so we need only prove
\[
    \sup_{\lambda \ge 0} \frac{f(\lambda)}{\Phi(\lambda)}\left(
      \lambda + \frac{f(\lambda)}{\Phi(\lambda)} \right) < 0.95~.
\]
We combine three inequalities, considering three ranges of the value
of $\lambda$, given by $[0,\lambda_1)$, $[\lambda_1,\lambda_2]$, and
$(\lambda_2,\infty)$,
where
\[
%   \lambda_1=\sqrt{\frac{\pi}{2}} - \sqrt{\frac{2}{\pi}} 
 \lambda_1= \frac{0.95 -\frac{2}{\pi}}{\sqrt{2/\pi}}
\approx 0.3927\ldots
%   0.4554\ldots
\quad \text{and} \quad
\lambda_2 = \sqrt{\log \frac{2/\pi}{0.95-\sqrt{2/(\pi e)}}   } \approx 
0.5584\ldots~.
%0.4582\ldots~.
\]
First, note that $f(\lambda)/\Phi(\lambda) \le \sqrt{2/\pi}$ since
$f/\Phi$ is a decreasing function. Thus,
\[
\frac{f(\lambda)}{\Phi(\lambda)}\left(
      \lambda + \frac{f(\lambda)}{\Phi(\lambda)} \right) 
\le 
\frac{2}{\pi}  + \lambda \sqrt{\frac{2}{\pi}} 
< 0.95 \quad \text{for $\lambda \in [0,\lambda_1)$.}
\]
Second, $\lambda e^{-\lambda^2/2} \le 1/\sqrt{e}$, so 
\[
\frac{f(\lambda)}{\Phi(\lambda)}\left(
      \lambda + \frac{f(\lambda)}{\Phi(\lambda)} \right) 
\le 
   2\lambda f(\lambda) + 2 f(\lambda)^2 \le 
  \sqrt{\frac{2}{\pi e}} + \frac{2}{\pi} e^{-\lambda^2} < 0.95
\quad \text{for $\lambda \in (\lambda_2,\infty)$.}
\]
Finally, since $\lambda e^{-\lambda^2/2}$ is increasing and
$e^{-\lambda^2/2}$ is decreasing on $[\lambda_1,\lambda_2]$,
on this interval we have
\[
\frac{f(\lambda)}{\Phi(\lambda)}\left(
      \lambda + \frac{f(\lambda)}{\Phi(\lambda)} \right) 
\le 2\lambda_2 f(\lambda_2) + 4 f^2(\lambda_1) \approx 
0.92685\ldots <0.95~.
%0.5355\ldots <1~.
\]
\end{proof}

\bibliographystyle{plain}
\bibliography{LocalOptima}

\end{document}